\documentclass[11pt,twoside]{amsart}
\usepackage{amsmath,amssymb,amsthm}
\usepackage{dsfont}
\usepackage{xcolor}
\textheight 22.5cm
\textwidth 15cm
\topmargin -4mm
\oddsidemargin 5mm
\evensidemargin 5mm

\newtheorem{thm}{Theorem}[section]

 \newtheorem{lem}{Lemma}[section]
 \newtheorem{prop}{Proposition}[section]
 
\newtheorem{rem}{Remark}[section]




\newcommand{\bFormula}[1]{\begin{equation} \label{#1}}
\newcommand{\eF}{\end{equation}}

\newcommand{\Div}{{\rm div}_x}
\newcommand{\Grad}{\nabla_x}
\newcommand{\vr}{\varrho}
\newcommand{\tildrho}{\tilde{\varrho}}
\newcommand{\bTheorem}[1]{\begin{Theorem} \label{T#1} }
\newcommand{\eT}{\end{Theorem}}
\newcommand{\bProposition}[1]{\begin{Proposition} \label{P#1}}
\newcommand{\eP}{\end{Proposition}}
\newcommand{\bLemma}[1]{\begin{Lemma} \label{L#1} }
\newcommand{\eL}{\end{Lemma}}
\newcommand{\bCorollary}[1]{\begin{Corollary} \label{C#1} }
\newcommand{\eC}{\end{Corollary}}

\newcommand{\A}{\mathcal{A}}

\newcommand\R{\mathbb{R}}

\newcommand{\ds}{\displaystyle}

\newcommand{\Tr}{\hbox{\rm{Tr}\,}}

\renewcommand{\div}{\mbox{\rm div}\;\!}

\def\cC{{\mathcal C}}

\def\cF{{\mathcal F}}

\newcommand{\andf}{\quad\hbox{and}\quad}
\renewcommand{\Re}{\textnormal{Re}}

\usepackage[top=3cm,bottom=3cm,left=3cm,right=3cm]{geometry}

\usepackage{hyperref,enumitem}
\usepackage{cleveref}
\hypersetup{colorlinks=true,linkcolor=red,citecolor=blue,filecolor=magenta,urlcolor=cyan} 

\usepackage[sort,nocompress]{cite}

\begin{document}

\title[On an Euler-Schr\"{o}dinger system appearing in laser-plasma interaction]{On an Euler-Schr\"{o}dinger system appearing in laser-plasma interaction}
\author{K. Bhandari \and B. Ducomet \and \v S. Ne\v{c}asov\'{a} \and J. S. H. Simon}

\begin{abstract}
 We consider the Cauchy problem for the barotropic Euler system coupled to a vector Schr\"{o}dinger equation in the whole space.
Assuming that the initial density and vector potential are small enough, and that the initial velocity is  close to some
reference vector field $u_0$ such that the spectrum of $Du_0$ is bounded away from zero, we prove the existence of a global-in-time unique
solution with (fractional) Sobolev regularity. Moreover, we  obtain some algebraic time decay estimates of the solution.
Our work extends the papers  by D. Serre and M. Grassin  \cite{GS,G2,S} and previous works  by B. Ducomet and co-authors \cite{BDDN,DD}
dedicated to the compressible Euler-Poisson system. 
\end{abstract}

\maketitle
{\bf Key words.} Compressible 
Euler-Schr\"{o}dinger  system, global solution, decay estimate.

\vspace{.1cm}

{\bf Mathematics Subject Classification.} 35Q31, 35J10, 76N10.

\section{Introduction}

 We consider a simple hydrodynamic
 model appearing in the context of laser-plasma interaction, introduced by R. Sentis in \cite{S1,S2}. The fluid system under study is the compressible Euler system  coupled to a vector Schr\"{o}dinger equation. 
 More precisely, the system reads as:
\bFormula{i1g}
\partial_t \vr + \Div (\vr u) = 0,
\eF
\bFormula{i2g}
\partial_t (\vr u) + \Div (\vr u \otimes u) + \Grad\Pi(\vr)
=-\gamma_p\vr\Grad (A^2),
\eF
\bFormula{i3g}
\frac{2i}{c}\partial_t A+\frac{1}{k_0}\Delta_x A
+\zeta_1 A
-\zeta_2\vr A=0,
\eF
with initial data
\bFormula{i0g}
(\vr,u,A)(x,0)=(\vr_0,u_0,A_0)(x),
\eF
where $\vr=\vr(t,x)$ is the fluid density, $u=u(t,x)$ is  the fluid velocity, and $A=A(t,x)$ is the (complex) vector potential representing the laser field.  Moreover,   
$\Pi(\vr)=K\vr^\gamma$ is the barotropic pressure  with $K>0$ and the adiabatic exponent $\gamma>1$, and $A^2 = \sum_{j=1}^d A_j \overline{A_j}$. Finally, in the equation \eqref{i3g},  $\zeta_1=k_0+i\eta_0$ and
$\zeta_2=\frac{1}{\varrho_c}\left(k_0-i\nu_0\right)$ are two complex coefficients. 

The various physical constants are as follows: 
\begin{itemize}  
\item[--] $\gamma_p : = \frac{Zq_e^2}{4m_i m_e \omega_0^2}$, 
where $Z>0$ is the ionization level,
 $q_e$ is the electronic charge,
 $m_i$ is the ionic mass, $m_e$ is the electronic mass and 
$\omega_0$ is the laser pulsation; 

\item[--] 
$k_0$ is  the wave number of the laser in the vacuum;

\item[--]
$c$ is the velocity of light;

\item[--] 
$\varrho_c =\frac{\omega_0^2\varepsilon_0 m_e}{q_e^2}$, where $\varepsilon_0 >0$ is the vacuum permeability;  

\item[--] $\eta_0>0$ and  $\nu_0=\frac{\nu_{ei}}{c}$, where $\nu_{ei}$ is the electron-ion collision frequency.
\end{itemize}

\smallbreak
 The above system is a simplified model for the 
evolution of a plasma interacting with an electromagnetic field, after an enveloppe approximation introduced by R. Sentis in \cite{S1} (see also \cite{S2} for a general context).

As it will be shown below, the only static solution of the system is the trivial one $(\vr=0,u=0,A=0)$, and it is thus natural to focus on  initial densities that are either compactly supported or tend to $0$ at infinity. 
 In order to solve the Cauchy problem for \eqref{i1g}--\eqref{i3g},
 the standard approach is
 to use an energy  method after ``symmetrizing'' the system.
 In our context where the density may tend to $0$ at infinity,
Makino's  symmetrization  (first introduced in \cite{Ma}) is the most appropriate one. 

Problem \eqref{i1g}--\eqref{i3g} may be solved locally in time in the Sobolev space setting
by  means of a suitable  adaptation of the standard theory for first order  
quasilinear symmetric hyperbolic systems (see \cite{be}).  
%
Our first aim  is to  prove well-posedness for  system \eqref{i1g}--\eqref{i3g} in a functional setting that encompasses Sobolev spaces. In this context, one can  mention the works  by Chemin \cite{C} and  Gamblin \cite{ga}, 
based on the use of uniformly local Sobolev spaces (that
does not allow to keep track of the behavior of the solution at infinity), 
and the recent work by  Brauer and Karp \cite{BK} in weighted
Sobolev spaces. 
Compared to that latter  work, our approach is particularly elementary, as it
just requires basic Lebesgue and Sobolev spaces. 
 
 Furthermore, our functional framework turns out to be also relevant for 
 reaching our second aim, which is  to produce  global-in-time strong solutions for \eqref{i1g}--\eqref{i3g}. Clearly,  as no dissipative process takes place in the system,
generic data  (even small) are expected to lead to  only local-in-time 
solutions and examples of finite time blow-up are known  (see \cite{C,CW,R}).
  However, in a series of papers \cite{GS,G2,S}, Grassin and Serre  pointed out 
 that, under a  ``dispersive'' spectral  condition  on the initial velocity,
 and a smallness assumption on the initial density, the compressible Euler system admits a unique global smooth solution.  Here,  after our recent works in \cite{BDDN} and \cite{DD} 
 we want to adapt  Grassin and Serre's result  to the Euler-Schr\"odinger system 
 within a functional framework that includes the trivial solution $(\varrho=0, u=0, A=0)$
 and allows to show  its stability.

\medbreak

{\bf Paper organization.} The rest of the paper is structured as follows. 
In the next section, we state our main results and give some insights on our strategy. 
In Section \ref{aux}, we establish decay estimates in Sobolev spaces for our  compressible Euler-Schr\"{o}dinger system. 
Section \ref{Proof-main-result} is devoted to the proofs of  the global existence 
results, and then we show the uniqueness of the solution.
Some technical results like, in particular, first and second order commutator 
estimates are proved in Appendix \ref{appendix}. 

\medbreak

{\bf Notations.} Throughout the paper, $C$ denotes a generic  constant that may vary from line.
Sometimes, we use  the notation $\Phi\lesssim \Psi$ to mean that 
$\Phi\leq C\Psi.$  The notation $\Phi\approx \Psi$  is used if  both properties $\Phi\lesssim \Psi$ and $\Psi\lesssim \Phi$ hold true. 

By ${\rm Sp}\,(\A)$, we  denote the spectrum of any matrix or operator $\A.$  Finally, we shall denote by $\dot H^s$ and $H^s$ the homogeneous and nonhomogeneous Sobolev
spaces of order $s$ on $\mathbb R^d$.


\section{Main results}\label{main}

Since our functional framework will force the density to tend to $0$ at infinity, the standard  
symmetrization for the compressible Euler equations is no longer appropriate, we thus use the one that has been introduced by T. Makino in \cite{Ma}; more precisely, we set 
\begin{equation}\label{eq:mak}
\tilde \varrho:=\frac{2\sqrt{K\gamma}}{\gamma-1}\,\varrho^{\frac{\gamma-1}2}, \ \ \gamma> 1 .
\end{equation}
After this change of variable, the system \eqref{i1g}--\eqref{i3g} rewrites
\begin{align}\tag{ES}
\begin{cases} 
 \ds (\partial_t+u\cdot\nabla_x)\tilde \vr+\frac{\gamma-1}2 \tilde \vr\,\Div u=0, \\
 \ds (\partial_t+u\cdot\nabla_x)u+\frac{\gamma-1}2\tilde \vr \,\nabla_x\tilde \vr=-\gamma_p\nabla_x A^2, \\
\ds  \frac{2i}{c}\partial_t A+\frac{1}{k_0}\Delta_x A
+\zeta A
-\zeta' \tilde \vr^{\frac{2}{\gamma-1}} A=0\cdotp
\end{cases}\label{eq:MP}
\end{align} 
where $\zeta=\zeta_1$ and $\zeta'=\left(\frac{\gamma-1}{2\sqrt{K\gamma}}\right)^{\frac{2}{\gamma-1}}\zeta_2.$

\medbreak
Let us consider an auxiliary Cauchy problem for the $d$-dimensional Burgers equation:
\begin{equation}
\label{auxiso}
\partial_t  v +  v \cdot \Grad  v = 0,
\end{equation}
with initial data 
\bFormula{auxiso0}
 v(0,x)=v_0(x).
\eF
In the particular case of the 
Euler equation (that is $\gamma_p=0$) and under suitable spectral conditions on $Du_0,$
it is known from \cite{G,GS} that \eqref{auxiso} is a good approximation  of  \eqref{i1g}--\eqref{i2g}
provided the density of the fluid is small enough.
The formal heuristics is that if one neglects $\tilde \vr$  in the velocity
equation of \eqref{eq:MP} (as well as if $\gamma_p=0$), then one readily gets
\eqref{auxiso}.

\medbreak

In order to state our results, we need to introduce the following function  space: 
$$E^s:=\bigl\{z\in\cC(\R^d;\R^d); \; Dz\in L^\infty \text{ and } D^2z\in H^{s-2}\bigr\} . $$
The following regularity result  has  been first proved in \cite{G,GS} in the case of
{\em integer} exponents and extended in \cite{BDDN} for the  \emph{real} exponents. 

\begin{prop}
\label{p:Burgers}
Let $v_0$ be in $E^s$ with  $s>1+d/2$ and satisfies the following: 
\begin{enumerate}[label=$\bf (H0)$,leftmargin=1.5cm]
\item\label{spectral_gap} there exists  $\varepsilon>0$  such that for any   $x\in\R^d$, ${\rm dist}({\rm Sp}\,(Dv_0(x)),\R_-)\geq \varepsilon$. 
 \end{enumerate}
Then 
\eqref{auxiso}--\eqref{auxiso0} has a classical solution $v$ on $\R_+\times\R^d$ such that
\[
  D^2v \in \cC^j\bigl(\R_+;H^{s-2-j}(\R^d)\bigr)\ \ \ \mbox{for}\ \ j=0,1.
\]
Moreover,  $D  v\in \cC_b(\R_+\times\R^d)$ and we have for any $t\geq 0$ and  $x\in \R^d,$
\begin{equation}
\label{Du}
D  v(t,x)= \frac{I_d}{(1+t)} + \frac{F(t,x)}{(1+t)^2} 
\end{equation}
for some function $F\in\cC_b(\R_+\times\R^d;\R^d\times\R^d)$ that satisfies
\begin{equation}
\|F(t)\|_{\dot H^{\sigma}}\leq K_\sigma(1+t)^{\frac d2-\sigma}\ \ \mbox{for all }\ 0<\sigma\leq s-1,
\label{estim2}
\end{equation}
where $K_\sigma>0$ is constant. 

Finally, if $D^2v_0$ is bounded,  we have
\begin{equation}
{\displaystyle \|D^2  v(t)\|_{L^\infty}\leq \frac{C}{(1+t)^{3}}}.
\label{estim3}
\end{equation}
\end{prop}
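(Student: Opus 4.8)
The plan is to work along the characteristics of the Burgers equation \eqref{auxiso}. Let $X(t,y)$ denote the flow, i.e. the solution of $\partial_t X(t,y)=v(t,X(t,y))$, $X(0,y)=y$; since \eqref{auxiso} says $v$ is transported along its own characteristics, $v(t,X(t,y))=v_0(y)$ and the characteristics are straight lines $X(t,y)=y+t\,v_0(y)$. Differentiating in $y$ gives $DX(t,y)=I_d+t\,Dv_0(y)$, and the hypothesis \ref{spectral_gap} guarantees that $\mathrm{Sp}(t\,Dv_0(y))\subset\{\Re z\geq \varepsilon t\}$, so $I_d+t\,Dv_0(y)$ is invertible with $\|(I_d+tDv_0)^{-1}\|\lesssim (1+t)^{-1}$ uniformly in $y$ (this is where the spectral gap, not merely invertibility, is essential — it forces the eigenvalue real parts to grow linearly). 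By the inverse function theorem the map $y\mapsto X(t,y)$ is a global diffeomorphism for every $t\geq 0$, which defines the classical solution $v$ on all of $\R_+\times\R^d$ and shows $v_0\in E^s$ propagates. The identity $Dv(t,x)=Dv_0(y)\,(DX(t,y))^{-1}=Dv_0(y)(I_d+tDv_0(y))^{-1}$ at $x=X(t,y)$ is the algebraic heart of everything that follows.

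From $Dv(t,x)=Dv_0(y)(I_d+tDv_0(y))^{-1}$ one extracts \eqref{Du} by the elementary matrix identity $M(I+tM)^{-1}=\frac1t\bigl(I-(I+tM)^{-1}\bigr)$, applied with $M=Dv_0(y)$; rearranging and using $(I+tM)^{-1}=\frac{1}{1+t}I + O\!\left((1+t)^{-2}\right)$ in operator norm (again via the spectral gap) yields $Dv(t,x)=\frac{I_d}{1+t}+\frac{F(t,x)}{(1+t)^2}$ with $F$ bounded in $L^\infty$ uniformly in time; boundedness of $Dv$ on $\R_+\times\R^d$ follows immediately. For the bound \eqref{estim3} on $D^2v$, I would differentiate the characteristic identities once more: $D^2v$ at $x=X(t,y)$ is a contraction of $D^2v_0(y)$ with three factors of $(I+tDv_0(y))^{-1}$ (one factor comes from the chain rule in $x$, two from differentiating $Dv_0(y)(I+tDv_0)^{-1}$ in $y$ and converting to an $x$-derivative), giving the gain $(1+t)^{-3}$ when $D^2v_0\in L^\infty$.

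The remaining and genuinely technical point is the Sobolev estimate \eqref{estim2} for $F$. Here one must transfer norms between the $y$ (Lagrangian) and $x$ (Eulerian) variables through the diffeomorphism $X(t,\cdot)$, whose Jacobian $\det(I_d+tDv_0)$ grows like $t^d$ for large $t$; this is the source of the factor $(1+t)^{d/2}$ in \eqref{estim2}. The strategy is: express $F(t,X(t,y))$ explicitly in terms of $Dv_0(y)$ and $(I_d+tDv_0(y))^{-1}$ from \eqref{Du}, estimate its homogeneous Sobolev norm in the $y$-variable using that $D^2v_0\in H^{s-2}$ together with the product/composition rules in $\dot H^\sigma$ for $0<\sigma\le s-1$ (controlling the $(I+tDv_0)^{-1}$ factors in $W^{k,\infty}$ uniformly, with decay), and then pull back to the $x$-variable, where the change of variables contributes $(\det DX)^{1/2}\sim (1+t)^{d/2}$. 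The interpolation between $L^\infty$ bounds on low-order derivatives and $H^{s-2}$ control of $D^2v_0$ is what restricts the range of $\sigma$. I expect this change-of-variables / para-product bookkeeping in fractional Sobolev spaces to be the main obstacle; everything else is a direct consequence of the straight-line characteristics and the spectral-gap bound on $(I_d+tDv_0)^{-1}$.
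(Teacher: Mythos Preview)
The paper does not actually prove Proposition~\ref{p:Burgers}; it is stated with a citation to \cite{G,GS} for integer exponents and to \cite{BDDN} for the extension to real exponents. Your characteristic-based outline is exactly the argument carried out in those references: straight-line characteristics $X(t,y)=y+t\,v_0(y)$, invertibility of $I_d+t\,Dv_0(y)$ from \ref{spectral_gap} together with $Dv_0\in L^\infty$, the identity $Dv(t,X(t,y))=Dv_0(y)(I_d+tDv_0(y))^{-1}$ yielding \eqref{Du} and \eqref{estim3}, and the Sobolev estimate \eqref{estim2} obtained by pulling back through $X(t,\cdot)$ with the Jacobian $\det(I_d+tDv_0)\sim(1+t)^d$ producing the factor $(1+t)^{d/2}$. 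So your proposal is correct and coincides with the approach of the cited sources; there is nothing further to compare within the present paper.
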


\medbreak 

Let us write the  main results of the present paper. 
\begin{thm}[\textbf{Existence of global solution}]
\label{isentro}
Let $s>1+d/2$ and $\gamma>1.$  Assume that the initial data $(\varrho_0,u_0,A_0)$ satisfy:
\vspace*{.1cm}
\begin{itemize}
[label=$\bf (H1)$,leftmargin=1.5cm]
\item\label{Hypo-1}
 there exists $v_0 \in E^{s+1}$ verifying  \ref{spectral_gap}
and that $u_0-v_0$ is small in $H^s$;
\end{itemize}
\begin{itemize}
[label=$\bf (H2)$,leftmargin=1.5cm]
\item\label{Hypo-2} $\vr_0^{\frac{\gamma-1}{2}}$ is small  in $H^s;$
\end{itemize}
\begin{itemize}
[label=$\bf (H3)$,leftmargin=1.5cm]
\item\label{Hypo-3} $A_0$ is small  in $H^s.$
\end{itemize}

Denote by  $ v$  the global solution of \eqref{auxiso}--\eqref{auxiso0}  as given in  Proposition
\ref{p:Burgers}. 
Then, there exists a unique global solution $\left(\vr,u,A\right)$ to
\eqref{i1g}--\eqref{i3g}, such that 
\[
\bigl(\vr^{\frac{\gamma-1}{2}},u- v,A\bigr) \in
\cC\bigl(\R_+;H^{s}\bigl(\R^d\bigr) \bigr),
\]
provided $d,$ $\gamma$ and $s$ satisfy the following additional condition:
$$
s>\max\{2d+3,s_+\},
$$
where $s_+$ is given by \eqref{eq:condPs}.
\end{thm}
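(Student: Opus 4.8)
The plan is to recast the Euler-Schr\"odinger system \eqref{eq:MP} as a perturbation of the decoupled Burgers flow $v$ plus a linear Schr\"odinger equation, and to run a fixed-point / continuation argument on the "fluctuation" unknowns $\dr:=\tilde\vr$, $\dw:=u-v$, $\df:=A$. Substituting $u=v+\dw$ into \eqref{eq:MP} and using that $v$ solves \eqref{auxiso}, one obtains an evolution system for $(\dr,\dw,\df)$ whose left-hand side is the symmetric hyperbolic transport operator $\partial_t+(v+\dw)\cdot\nabla_x$ acting on $(\dr,\dw)$ together with the Schr\"odinger operator $\frac{2i}{c}\partial_t+\frac1{k_0}\Delta_x+\zeta$ acting on $\df$, and whose right-hand side collects: (i) the genuinely quadratic interaction terms $\tilde\vr\,\Div u$, $\tilde\vr\nabla\tilde\vr$, $\gamma_p\nabla A^2$, $\zeta'\tilde\vr^{2/(\gamma-1)}A$; and (ii) the "source" terms created by $Dv$, most importantly the damping-like term $\frac{\gamma-1}2(\Div v)\,\dr$ and the term $(\dw\cdot\nabla v)$ in the velocity equation. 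The key structural point — exactly as in \cite{GS,BDDN} — is that by \eqref{Du} one has $Dv(t,x)=\frac{I_d}{1+t}+O((1+t)^{-2})$, so $\Div v\approx \frac{d}{1+t}$ acts as an effective damping of strength $d/(1+t)$ on $\dr$, and $\dw\cdot\nabla v$ is "almost" $\frac{1}{1+t}\dw$; this time-decaying linear structure is what will drive the global bounds.

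The core of the argument is a weighted energy estimate in $H^s$. I would introduce time weights $(1+t)^{\alpha}$ (with exponents $\alpha$ depending on $d,\gamma,s$, and tied to the decay rates in \eqref{estim2}--\eqref{estim3}) and define a norm roughly of the form
\[
\cX(T):=\sup_{t\le T}\Bigl((1+t)^{a}\|\dr(t)\|_{H^s}+(1+t)^{b}\|\dw(t)\|_{H^s}+(1+t)^{e}\|\df(t)\|_{H^s}\Bigr).
\]
Applying $\Grad^k$ for $k\le s$ (or Littlewood--Paley blocks, to handle the fractional $s$), taking $L^2$ inner products, and using Friedrichs-type symmetrization for the hyperbolic part, the dangerous commutators $[\Grad^k,(v+\dw)\cdot\nabla_x]$ and $[\Grad^k, \text{coefficients}]$ are controlled by the first- and second-order commutator estimates announced for Appendix \ref{appendix}, together with the bound $D v\in\cC_b$ and the decay \eqref{estim3} for $D^2v$. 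The damping term contributes a good sign $-\frac{d(\gamma-1)}{2(1+t)}\|\dr\|_{H^s}^2$ (up to lower-order corrections absorbed for $t$ large), the Schr\"odinger part is handled by the usual $L^2$ conservation of $\frac1c\|A\|_{L^2}^2$ refined at the $H^s$ level (the operator $\frac1{k_0}\Delta+\zeta$ is skew-adjoint up to the bounded term $\zeta$, so no derivative loss), and all the quadratic terms are estimated by product laws in $H^s$ ($s>1+d/2$ gives the algebra property) and then shown to be time-integrable \emph{because} of the inserted weights — this is precisely where the arithmetic condition $s>\max\{2d+3,s_+\}$ enters, ensuring that the exponents line up so that $\int_0^\infty(\text{quadratic contributions})\,dt<\infty$ and the nonlinear terms close as $\cX(T)\lesssim \cX(0)+\cX(T)^2$.

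From the a priori estimate the global statement follows by the standard continuation argument: local existence and uniqueness in $H^s$ for the symmetric-hyperbolic/Schr\"odinger system is available (adapting \cite{be}), so there is a maximal time $T^*$; on $[0,T^*)$ the a priori bound gives $\cX(T)\le C_0$ for all $T<T^*$ once the data are small enough that $C\cX(0)<\tfrac14$, say, via the usual bootstrap $\cX(T)\le 2C\cX(0)$; this uniform bound together with the blow-up criterion (control of $\|(\dr,\dw,A)\|_{H^s}$ and of $\|Du\|_{L^\infty}\le \|Dv\|_{L^\infty}+\|D\dw\|_{L^\infty}$, the latter finite since $s>1+d/2$) forces $T^*=\infty$. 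Uniqueness is obtained by an energy estimate on the difference of two solutions in $H^{s-1}$ (or $L^2$) using Gr\"onwall. The main obstacle I anticipate is \emph{bookkeeping of the weights}: one must choose the exponents $a,b,e$ so that (a) the linear damping beats the worst source term $\dw\cdot\nabla v$ and the $Dv$-generated terms, (b) every quadratic term — in particular the nonlinearity $\tilde\vr^{2/(\gamma-1)}A$, whose power $2/(\gamma-1)$ is not an integer and interacts badly with $H^s$ product estimates when $\gamma$ is close to $1$ — is time-integrable, and (c) the weights are mutually consistent across the three equations. Reconciling all these inequalities is exactly what forces the technical restriction $s>\max\{2d+3,s_+\}$ with $s_+$ as in \eqref{eq:condPs}, and getting that optimal (or at least consistent) is the delicate part; the rest is routine energy analysis.
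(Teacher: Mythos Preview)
Your proposal is correct and follows essentially the same approach as the paper: both derive weighted $H^s$ energy estimates for $(\tilde\vr,u-v,A)$ exploiting the damping $\Div v\approx d/(1+t)$ from Proposition~\ref{p:Burgers}, control the commutators $[\dot\Lambda^s,v]\nabla$ via a second-order commutator lemma, close a bootstrap in the weighted quantity (the paper does this through a differential inequality for $Y_s$ and an ODE lemma rather than your $\cX(T)\lesssim\cX(0)+\cX(T)^2$ formulation, but the content is the same), and obtain uniqueness by an $L^2$ Gr\"onwall estimate on the difference.

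The only structural difference worth flagging is in the construction of the solution: rather than invoking a local well-posedness result and running a continuation/blow-up argument as you outline, the paper builds global approximate solutions directly by Friedrichs truncation $J_n$ (together with a spatial cutoff $\chi(n^{-1}\cdot)v$ of the Burgers background so that the system becomes an ODE in $L^2$), applies the a-priori estimates of Section~\ref{aux} verbatim to the regularized system, and then passes to the limit via weak$^*$ compactness and Aubin--Lions. Your route is equally legitimate and arguably more standard; the paper's route has the advantage of not needing a separate local theory and of making the a-priori estimates rigorous without an additional bootstrap layer.
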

\begin{rem}\label{r:finitemass} The fact that $\varrho^{\frac{\gamma-1}2}\in \cC(\R_+;H^s(\R^d))$
implies that $\varrho\in\cC(\R_+;L^1(\R^d))$ whenever $\gamma\leq2$. 
Hence, the constructed solutions have finite mass, and one can show that it is conserved
through the evolution.
\end{rem}

Furthermore, the following decay estimates  are satisfied by  the global solutions constructed in the previous theorem.

\begin{thm}[\textbf{Decay estimates}]\label{decay}
Let all the assumptions of  Theorem \ref{isentro} be in force.  Then, for all $\sigma$ in $[0,s],$
 the solution $(\vr,u,A)$ constructed therein satisfies
$$
\left\|\vr^{\frac{\gamma-1}{2}},u- v, A\right\|_{\dot H^\sigma}\leq C (1+t)^{\frac d2-\sigma
-\min\left(1, \, d(\frac{\gamma-1}2)\right)},
$$
where $C$ depends only on the initial data, on $d,$ $\gamma,$
 and on $\sigma.$ 
\end{thm}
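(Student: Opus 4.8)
The plan is to run a bootstrap/energy argument on the perturbed unknowns $(\rho^{\frac{\gamma-1}{2}}, u-v, A)$, exploiting the structure already established in the proof of Theorem \ref{isentro}. Write $\tilde\varrho$ for the Makino variable, $w:=u-v$ for the velocity perturbation, and keep $A$ as is. Plugging $u=v+w$ into the symmetrized system \eqref{eq:MP} and using the Burgers equation \eqref{auxiso} for $v$, one obtains a quasilinear symmetric hyperbolic system for $(\tilde\varrho, w, A)$ whose linear part carries a damping term coming from $Dv$: schematically,
\begin{equation}
(\partial_t + u\cdot\nabla_x)\tilde\varrho + \tfrac{\gamma-1}{2}\tilde\varrho\,\Div w = -\tfrac{\gamma-1}{2}\tilde\varrho\,\Div v,
\end{equation}
\begin{equation}
(\partial_t + u\cdot\nabla_x) w + \tfrac{\gamma-1}{2}\tilde\varrho\,\nabla_x\tilde\varrho + (Dv)\,w = -\gamma_p\nabla_x A^2,
\end{equation}
together with the Schrödinger equation. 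The key point, already used to prove global existence, is that by \eqref{Du} we have $Dv(t,x)=\frac{I_d}{1+t}+O((1+t)^{-2})$, so the zeroth-order term $(Dv)w$ and the $\Div v\sim \frac{d}{1+t}$ term act as a genuine time-dependent damping of rate $\frac{1}{1+t}$ (and of rate $\frac{d}{1+t}\cdot\frac{\gamma-1}{2}$ on $\tilde\varrho$).

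The core step is a weighted energy estimate at the $\dot H^\sigma$ level. Applying $\Lambda^\sigma$ (or $\dot\Delta_j$ Littlewood–Paley blocks and summing) to the system, taking the real $L^2$ inner product with $(\Lambda^\sigma\tilde\varrho, \Lambda^\sigma w, \Lambda^\sigma A)$, the symmetric structure makes the principal quasilinear terms cancel up to commutators, the transport term $u\cdot\nabla$ contributes only $\|\Div u\|_{L^\infty}\lesssim (1+t)^{-1}$ by Proposition \ref{p:Burgers}, the Schrödinger dispersion term is skew-adjoint and drops out, and the damping terms produce a favorable contribution of size $-\frac{c_0}{1+t}\|(\tilde\varrho,w,A)\|_{\dot H^\sigma}^2$ for a suitable $c_0>0$ (with the $\tilde\varrho$-piece gaining the extra factor $d\frac{\gamma-1}{2}$, which is where the $\min(1,d\frac{\gamma-1}{2})$ in the exponent comes from). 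The remaining source/commutator terms — $\nabla_x A^2$, the $\zeta'\tilde\varrho^{2/(\gamma-1)}A$ term, commutators $[\Lambda^\sigma,u\cdot\nabla]$, $[\Lambda^\sigma, \tilde\varrho]\nabla\tilde\varrho$, and $[\Lambda^\sigma, Dv]w$ — are estimated using the first/second-order commutator lemmas from Appendix \ref{appendix}, the already-proven uniform bounds $\|(\tilde\varrho,w,A)\|_{H^s}\lesssim 1$ and the smallness of the initial data, and the $\dot H^\sigma$-decay of $F=Dv-\frac{I_d}{1+t}$ from \eqref{estim2}; each of them is seen to be lower-order, i.e. controllable by $\frac{\epsilon}{1+t}$ times the energy plus fast-decaying remainders. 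This yields a differential inequality of the form
\begin{equation}
\frac{d}{dt}\,X_\sigma(t) \leq \Bigl(-\frac{c_0-C\epsilon}{1+t}\Bigr) X_\sigma(t) + \frac{C}{(1+t)^{N}},
\end{equation}
where $X_\sigma=\|(\tilde\varrho,w,A)\|_{\dot H^\sigma}^2$ and $N$ is large; Grönwall then gives $X_\sigma(t)\lesssim (1+t)^{-2\alpha_\sigma}$ with $\alpha_\sigma$ determined by $c_0$. Matching $c_0$ to the claimed rate requires an induction on $\sigma$: the decay at level $\sigma$ feeds the source terms at level $\sigma'>\sigma$, so one proves the estimate first for small $\sigma$ (using $L^2$ and low-frequency bounds, plus conservation of mass from Remark \ref{r:finitemass} when $\gamma\le 2$ to handle $\nabla A^2$) and then propagates it upward in $\sigma$, finally interpolating to cover the whole range $[0,s]$.

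The main obstacle I expect is bookkeeping the precise damping constant $c_0$ and showing it is large enough to produce exactly the exponent $\frac d2-\sigma-\min(1,d\frac{\gamma-1}{2})$, rather than something weaker. This is delicate for two reasons: first, the damping coefficient in front of $\tilde\varrho$ is $\frac{\gamma-1}{2}\Div v\approx \frac{d(\gamma-1)/2}{1+t}$ while that in front of $w$ is $\approx\frac{1}{1+t}$ (from the $I_d/(1+t)$ in $Dv$), so the two components decay at genuinely different rates and the slower one dictates the final exponent — one must be careful that cross terms coupling $\tilde\varrho$ and $w$ (the $\tilde\varrho\nabla\tilde\varrho$ and $\tilde\varrho\Div w$ terms) do not destroy the better rate of the faster component before the worst rate is reached. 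Second, the nonlinear source $\gamma_p\nabla_x A^2$ and the quadratic-in-$\tilde\varrho$ Schrödinger coupling must be shown to decay strictly faster than the linear damping rate, which forces the preliminary low-$\sigma$ decay estimates to be sharp; the finite-mass/conservation remark and the $L^1\cap L^2$-type control of $A$ (via the Schrödinger equation's own dispersive decay, if needed) are the tools to close this. Everything else — local existence, the uniform-in-time $H^s$ bound, commutator estimates — is either quoted from Proposition \ref{p:Burgers}, Theorem \ref{isentro}, or Appendix \ref{appendix}.
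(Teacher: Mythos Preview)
Your overall strategy---weighted energy estimates on $(\tilde\varrho,w,A)$ exploiting the damping $Dv\sim I_d/(1+t)$---matches the paper, but the execution you propose has a genuine gap. The linear differential inequality $\frac{d}{dt}X_\sigma\le -\frac{c_0-C\epsilon}{1+t}X_\sigma+C(1+t)^{-N}$ cannot be reached from smallness alone: a commutator contribution like $\|\nabla_x w\|_{L^\infty}\|(\tilde\varrho,w)\|_{\dot H^s}^2$ gives only $\epsilon\,\dot X_s^2$, \emph{not} $\frac{\epsilon}{1+t}\dot X_s^2$, and the former is not absorbable by the $\frac{1}{1+t}$ damping for large $t$. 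Moreover the ``already-proven uniform bounds $\|(\tilde\varrho,w,A)\|_{H^s}\lesssim1$'' you invoke are not independently available: in the paper the decay estimates of Section~\ref{aux} \emph{are} the a-priori bounds fed (verbatim) into the existence proof of Section~\ref{Proof-main-result}, so uniform-in-time control and decay are obtained simultaneously, not sequentially.

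The paper's route also differs from your induction-on-$\sigma$ plan. It works only at the two endpoints $\sigma=0$ and $\sigma=s$, expresses every nonlinear and commutator term via interpolation as a product $\dot X_0^{\alpha}\dot X_s^{\beta}$ (e.g.\ $\|\nabla w\|_{L^\infty}\lesssim\dot X_0^{1-(d/2+1)/s}\dot X_s^{(d/2+1)/s}$), introduces weighted variables $\dot Y_\sigma=(1+t)^{c_{d,\gamma,\sigma}-a}\dot X_\sigma$ with a specific $a>1$, and derives for $Y_s=(\dot Y_0^2+\dot Y_s^2)^{1/2}$ a \emph{genuinely nonlinear} ODE
\[
\frac{d}{dt}Y_s+\frac{a}{1+t}Y_s\lesssim\frac{Y_s}{(1+t)^2}+Y_s^2+\sum_j(1+t)^{m_j'-1}Y_s^{m_j+1}.
\]
This is closed not by Gr\"onwall but by the bootstrap Lemma~\ref{l:ODE}; the conditions $m_j'<m_ja$ required there are precisely what force the restriction $s>\max\{2d+3,s_+\}$. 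Intermediate $\sigma$ then follow by straight interpolation between $\sigma=0$ and $\sigma=s$. One further point: the decay mechanism for $A$ is not Schr\"odinger dispersion but the built-in absorption $\eta_0=\mathrm{Im}\,\zeta>0$ in the zero-order term $\zeta A$, which is why $\eta_0$ enters $c_{d,\gamma}$ in \eqref{eq:cdg}.
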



\section{Decay estimates in Sobolev spaces}\label{aux}

The goal of the present section is to prove  
a-priori  decay estimates in Sobolev spaces for the discrepancy between  
the solutions of \eqref{eq:MP}  and \eqref{auxiso} (i.e., Theorem \ref{decay}) . These estimates  will play a fundamental role 
in the proof of our global existence result.  




\bigskip 

Let  $v$ be the solution of the Burgers equation given by Proposition \ref{p:Burgers}. 
Let us set   $w:=u-v$ where $(\tilde \varrho,u,A)$ stands for a sufficiently smooth solution of \eqref{eq:MP} on $[0,T]\times\R^d.$ 
Then $(\tilde\varrho,w,A)$ satisfies:
\begin{align}\tag{BB}
\begin{cases}
\ds (\partial_t+w\cdot\nabla_x)\tilde\varrho+\frac{\gamma-1}2\tilde \varrho\,\div_x w+v\cdot\nabla_x\tilde\varrho+\frac{\gamma-1}2\tilde\varrho\,\div_x v=0,\\ 
\ds (\partial_t+w\cdot\nabla_x)w+\frac{\gamma-1}2\tilde\varrho\,\nabla_x\tilde \varrho+v\cdot\nabla_x w+w\cdot\nabla_x v=-\gamma_p\nabla_x \phi,\\
\ds \frac{2i}{c}\partial_t A+\frac{1}{k_0}\Delta_x A
+\zeta A
-\zeta'\tilde\varrho^{\frac{2}{\gamma-1}} A=0,
\end{cases}\label{eq:BB}
\end{align} 
where $\phi:=A^2$.

Our aim is to prove decay estimates in $\dot H^\sigma$ for the solution of \eqref{eq:BB} for all $0\leq\sigma\leq s.$
Clearly, arguing by interpolation, it suffices to consider the extreme cases $\sigma=0$ and $\sigma=s.$

\medbreak

(i) \underline{\em Case I:  $\sigma=0$}.  Taking the $L^2$ 
scalar product of the three equations of \eqref{eq:BB} with $(\tilde\varrho,w,\overline A)$ gives first
\begin{multline}\label{eq:L20}
\frac12\frac d{dt}\|(\tilde\varrho,w)\|_{L^2}^2-\frac12\int_{\R^d}(\tilde\varrho^2+|w|^2)\div_x v\,dx
-\frac12\int_{\R^d}(\tilde\varrho^2+|w|^2)\div_x w\,dx \\
+\frac{\gamma-1}2\int_{\R^d}\tilde\varrho^2\div_x v\,dx
+\int_{\R^d}(w\cdot\nabla_x v)\cdot w\,dx +\frac{\gamma-1}4 \int_{\R^d} \tilde\varrho^2 \div_x w\, dx \\
= -\gamma_p\int_{\R^d}\nabla_x\phi\cdot w\,dx,
\end{multline}
and
\begin{align}\label{eq:L21}
\frac{1}{c}\frac{d}{dt}\|\phi\|_{L^1}
+\eta_0\|\phi\|_{L^1}
+\left(\frac{\gamma-1}{2\sqrt{K\gamma}}\right)^{\frac{2}{\gamma-1}}\frac{\nu_0}{\varrho_c}\int_{\R^d}\tilde\varrho^{\frac{2}{\gamma-1}} \phi \ dx=0.
\end{align} 

We first observe that due to Proposition \ref{p:Burgers}
\begin{align*}
\Div v = \frac{d}{(1+t)}  + \frac{\Tr(F(t,x))}{(1+t)^2} .
\end{align*}
Then, considering 
\begin{equation}\label{eq:cdg} c_{d,\gamma}:= \min\left\{\eta_0,{(\gamma-1)}\frac{d}{2}\right\}-\frac d2 ,
\end{equation}
{}and  denoting by $M$,  the $L^\infty$-bound of $F$,   we deduce from \eqref{eq:L20} that,
\begin{multline}\label{eq:L2}
\min\Big\{\frac12, \frac1c\Big\}
\frac{d}{dt}\|(\tildrho,w,A)\|_{L^2}^2 + \frac{c_{d,\gamma}}{1+t}\|(\tildrho,w,A)\|_{L^2}^2
+\frac{\tilde\nu_0}{\varrho_c}\|\tildrho^{\frac{2}{\gamma-1}} \phi\|_{L^1} 
\leq
\gamma_p\|\nabla_x\phi\cdot w\|_{L^1}
\\+\frac{M\max\left\{1+\frac d2, (\gamma-1)\frac d2\right\}}{(1+t)^2}\|(\tildrho,w,A)\|_{L^2}^2+\max\left\{\frac12,\frac{\gamma-1}4\right\}\|\div_x w\|_{L^\infty}\|(\tildrho,w,A)\|_{L^2}^2.
\end{multline}
Here and in the sequel, we denote $\displaystyle \tilde \nu_0 = \left(\frac{\gamma-1}{2\sqrt{K\gamma}}\right)^{\frac{2}{\gamma-1}} \nu_0$.

Note that, even if  $\gamma_p=0$ (i.e., standard compressible Euler equation), proving existence results for \eqref{eq:BB} 
requires a control on $\|\div_x w\|_{L^\infty}.$ Owing to the hyperbolicity of the system, 
it seems (at least in the multi-dimensional case) difficult to go beyond the energy framework, 
and it is thus natural to look for a-priori estimates in  Sobolev spaces $H^s.$
Now, owing to Sobolev embedding, the minimal requirement to get eventually a bound on 
 $\|\Div w\|_{L^\infty}$ is that $s>1+d/2.$ 

  \medbreak
  
(ii) \underline{\em Case II:  $\sigma=s$}. In order to prove Sobolev estimates, we introduce  the homogeneous fractional derivation operator 
 $\dot\Lambda^s$  defined by  $\cF(\dot\Lambda^sf)(\xi):=|\xi|^s\cF f(\xi)$ and observe that  
 $\tildrho_s:=\dot\Lambda^s\tildrho,$ $w_s:=\dot\Lambda^sw$ and $\phi_s:=\dot\Lambda^s\phi$ satisfy
(with the usual summation convention over repeated indices)
\begin{align}\tag{BB$_s$}  
\begin{cases} 
\ds (\partial_t  + w \cdot \nabla_x)\tildrho_s + \frac{\gamma-1}{2} \tildrho \, \Div w_s + v \cdot \nabla_x \tildrho_s - s \partial_j v^k \dot \Lambda^{-2}\partial^2_{jk}\tildrho_s
+ \frac{\gamma-1}2 \dot \Lambda^s(\tildrho \, \Div v) \\ 
\hspace{10cm} \ds =\dot R^1_s + \dot R^2_s + \dot R^3_s,\\
\ds (\partial_t + w \cdot\nabla_x)w_s+\frac{\gamma-1}2\tildrho\nabla_x\tildrho_s+v\cdot\nabla_x w_s -s\partial_j v^k \dot \Lambda^{-2}\partial^2_{jk} w_s + \dot \Lambda^s(w\cdot\nabla_x v) \\
\hspace{8.5cm} \ds =\dot R^4_s+\dot R^5_s+\dot R^6_s+\gamma_p\nabla_x\phi_s,\\
\ds \frac{2i}{c}\partial_t A_s+\frac{1}{k_0}\Delta_x A_s
+\zeta A_s-\zeta' \dot\Lambda^s\left(\tildrho^{\frac{2}{\gamma-1}}A\right)=0.
\end{cases}\label{eq:BB_s}
\end{align}
with 
$$\begin{array}{ll}
\dot R^1_s : =[w,\dot\Lambda^s]\nabla_x\tildrho,\quad & \dot R^4_s:=[w,\dot\Lambda^s]\nabla_x w, \\ [1ex]
\dot R^2_s:= \frac{\gamma-1}2[\tildrho,\dot\Lambda^s]\Div w,\quad&\dot R^5_s:=\frac{\gamma-1}2[\tildrho,\dot\Lambda^s]\nabla_x\tildrho,\\[1ex]
\dot R^3_s:=[v,\dot\Lambda^s]\nabla_x\tildrho-s\partial_j v^k\dot\Lambda^{-2}\partial^2_{jk}\tildrho_s,\quad&
\dot R^6_s:=[v,\dot\Lambda^s]\nabla_x w-s\partial_jv^k\dot\Lambda^{-2}\partial^2_{jk} w_s. 
\end{array}
$$
The definition of $\dot R^3_s$ and $\dot R^6_s$ is motivated
by the fact that, according to  the classical  theory of pseudo-differential operators, we  expect to have 
$$
[\dot \Lambda^s,v]\cdot\nabla_x z=\frac1i\bigl\{|\xi|^s,v(x)\bigr\}(D)\nabla_x z +\hbox{remainder}.
$$
Computing  the \emph{Poisson bracket}  in the right-hand side  yields  
 $$
 \frac1i\bigl\{|\xi|^s,v(x)\bigr\}(D) = - s \partial_j v \dot\Lambda^{s-2} \partial_j.
 $$
 Now, taking advantage of 
 Proposition \ref{p:Burgers}, we get 
$$-\partial_j v^k\dot\Lambda^{-2}\partial^2_{jk}z=\frac1{1+t}z-\frac{F_{kj}}{(1+t)^2}\dot\Lambda^{-2}\partial^2_{jk}z,
$$
and also
$$\begin{aligned}
\dot\Lambda^s(\tildrho\,\div_x v)&=\frac d{1+t}\tildrho_s+\frac1{(1+t)^2}\dot\Lambda^s(\tildrho\,{\rm Tr}\, F)\\
\andf \dot \Lambda^s(w\cdot\nabla_x v)&=\frac1{1+t}w_s+\frac1{(1+t)^2}\dot\Lambda^s(F\cdot w).\end{aligned}$$
Hence, taking the $L^2$ inner product of \eqref{eq:BB_s} with $(\tildrho_s,w_s,A_s),$  and denoting 
\begin{equation}\label{eq:cdgs}c_{d,\gamma,s}:= c_{d,\gamma}+s,\end{equation}
 we discover that 
\begin{multline}\label{eq:Hs}
\min\Big\{\frac12,\frac1c\Big\}\frac d{dt}\|(\tildrho_s,w_s,A_s)\|_{L^2}^2
+\frac{c_{d,\gamma,s}}{1+t}\|(\tildrho_s,w_s,A_s)\|_{L^2}^2\\
\leq
\gamma_p\|\nabla_x\phi_s\cdot w_s\|_{L^1}\!
+\frac{\tilde \nu_0}{\varrho_c}\|\dot\Lambda^s\left(\tildrho^{\frac{2}{\gamma-1}} A\right)\|_{L^2}\|A_s\|_{L^2}
+\!\frac{\|\div_x
  w\|_{L^\infty}}2\|(\tildrho_s,w_s,A_s)\|_{L^2}^2 \\
  +\frac{\gamma-1}2\|\nabla_x \tildrho\|_{L^\infty}\|\tildrho_s\|_{L^2}\|w_s\|_{L^2}
+\frac{sM}{(1+t)^2} \|(\tildrho_s,w_s,A_s)\|_{L^2}^2\\+
\biggl[\frac{1}{(1\!+\!t)^2}\left(\frac{\gamma-1}2\|\dot\Lambda^s(\tildrho{\rm Tr}\, F)\|_{L^2}+\|\dot\Lambda^s(F\cdot w)\|_{L^2}\right) 
+\sum_{j=1}^6\|\dot R_s^j\|_{L^2}\biggr]\|(\tildrho_s,w_s,A_s)\|_{L^2}.
\end{multline}
The  terms $\dot R_s^1,$ $\dot R_s^2,$ $\dot R_s^4$ and $\dot R_s^5$  may be treated according to the homogeneous  version  of {\em Kato--Ponce} commutator estimates (see Lemma \ref{l:com1} in  the Appendix). 
We get
$$\begin{aligned}
\|\dot R^1_s\|_{L^2}&\lesssim \|\nabla_x\tildrho\|_{L^\infty}\|\nabla_x w\|_{\dot H^{s-1}}+\|\nabla_x w\|_{L^\infty}\|\tildrho\|_{\dot H^{s}},\\
\|\dot R_s^2\|_{L^2}&\lesssim \|\div_x w\|_{L^\infty}\|\tildrho\|_{\dot H^{s}}+\|\nabla_x \tildrho\|_{L^\infty}\|\div_x w\|_{\dot H^{s-1}},\\
\|\dot R_s^4\|_{L^2}&\lesssim \|\nabla_x w\|_{L^\infty}\|w\|_{\dot H^{s}},\\
\|\dot R_s^5\|_{L^2}&\lesssim \|\nabla_x\tildrho\|_{L^\infty}\|\tildrho\|_{\dot H^{s}}.\end{aligned}$$
The (more involved) terms $\dot R_s^3$ and $\dot R_s^6$  may be handled thanks to  Lemma~\ref{l:com2}.
We get
$$\begin{aligned}\|\dot R_s^3\|_{L^2}
&\lesssim \|\nabla_x\tildrho\|_{L^\infty}\|v\|_{\dot H^{s}}+\|\nabla_x^2v\|_{L^\infty}\|\nabla_x\tildrho\|_{\dot H^{s-2}},\\
\|\dot R_s^6\|_{L^2}&\lesssim \|\nabla_x w\|_{L^\infty}\|v\|_{\dot H^{s}}+\|\nabla_x^2v\|_{L^\infty}\|\nabla_x w\|_{\dot H^{s-2}}.\end{aligned}$$
Finally, the standard Sobolev type estimate yields 
$$\begin{aligned}
\|\dot\Lambda^s(\tildrho \, {\rm Tr}\, F)\|_{L^2}
&\lesssim\|\tildrho\|_{L^\infty}\|{\rm Tr}\, F\|_{\dot H^{s}} + \|{\rm Tr}\, F\|_{L^\infty}\|\tildrho\|_{\dot H^s} + \|\nabla_x \tildrho \|_{L^\infty} \|{\rm Tr} \, F\|_{\dot H^{s-1}}
\\
\andf 
\|\dot\Lambda^s(F\cdot w)\|_{L^2}&
\lesssim\|w\|_{L^\infty}\|F\|_{\dot H^{s}} + 
\|F\|_{L^\infty} \|w\|_{\dot H^s} + \|\nabla_x w\|_{L^\infty} \|F\|_{\dot H^{s-1}} . 
\end{aligned}
$$

Plugging all the above estimates in \eqref{eq:Hs} and using Proposition \ref{p:Burgers},   we end up with 
\begin{multline}\label{eq:Hs2}
\min\Big\{\frac12,\frac1c \Big\}\frac d{dt}\|(\tildrho,w,A)\|_{\dot H^s}^2+\frac{c_{d,\gamma,s}}{1+t}\|(\tildrho,w,A)\|_{\dot H^s}^2 \\
\lesssim
\gamma_p\|\nabla_x\phi_s\cdot w_s\|_{L^1}\!
+\frac{\tilde \nu_0}{\varrho_c}\|\dot\Lambda^s(\tildrho^{\frac{2}{\gamma-1}} A\|_{L^2}\|A_s\|_{L^2}\\+\frac{1}{(1+t)^2}\|(\tildrho,w,A)\|_{\dot H^s}^2
+\|D^2v\|_{\dot H^{s-1}}\|(\tildrho,w)\|_{L^\infty}\|(\tildrho,w,A)\|_{\dot H^s}
\\+\|(\nabla_x\tildrho,\nabla_x w)\|_{L^\infty}\|(\tildrho,w,A)\|_{\dot H^s}^2+\|\nabla^2_xv\|_{L^\infty}\|(\tildrho,w,A)\|_{\dot H^{s-1}}
\|(\tildrho,w,A)\|_{\dot H^s}.\end{multline}


\vspace*{.2cm}

Let us introduce the  notation
$$\dot X_\sigma:=\|(\tildrho,w,A)\|_{\dot H^\sigma}\andf
X_\sigma:=\sqrt{\dot X_0^2+\dot X_\sigma^2}\approx \|(\tildrho,w,A)\|_{H^\sigma}
 \quad\hbox{for }\ \sigma\geq0.$$
Our aim is to bound the right-hand side of \eqref{eq:L2} and  \eqref{eq:Hs2} in terms of $\dot X_0$ and $\dot X_s$ only. 

Arguing by interpolation, we get 
\begin{eqnarray}\label{eq:interpo1}
\|(\tildrho,w,A)\|_{L^\infty}&\!\!\!\lesssim\!\!\!& \dot X_0^{1-\frac d{2s}}\dot X_s^{\frac d{2s}},\\\label{eq:interpo2}
\|(D\tildrho,Dw,DA)\|_{L^\infty}&\!\!\!\lesssim\!\!\!& \dot X_0^{1-\frac1s(\frac d2+1)}\dot X_s^{\frac 1s(\frac d2+1)},\\ \label{eq:interpo3}
\|(\tildrho,w,A)\|_{\dot H^{s-1}}&\!\!\!\lesssim\!\!\!& \dot X_0^{\frac1s}\dot X_s^{1-\frac1s}.
\end{eqnarray}
Let us bound the electromagnetic contributions.
We first have
$$
\gamma_p\|\nabla_x\phi\cdot w\|_{L^1}
\leq
\gamma_p\|A^2\div_x w\|_{L^1}
\leq
\gamma_p\|A\|_{L^2}^2\|\div_x w\|_{L^\infty}
$$
$$
\lesssim
\|(\tildrho,w,A)\|_{L^2}^2\|\div_x w\|_{L^\infty}.
$$
From this, we get
$$
\gamma_p\|\nabla_x\phi\cdot w\|_{L^1}
\lesssim 
\dot X_0^{3-\frac{1}{s}(\frac{d}{2}+1)}
\dot X_s^{\frac{1}{s}(\frac{d}{2}+1)}.
$$
In the same stroke 
$$
\gamma_p\|\nabla_x\phi_s\cdot w_s\|_{L^1}
\leq
\gamma_p\|\nabla_x\phi\|_{\dot H^s}\| w_s\|_{L^2}
\leq
\gamma_p\|A^2\|_{\dot H^{s-1}}\| w_s\|_{L^2}$$
$$
\lesssim
\|A\|_{L^\infty}\|A\|_{\dot H^{s-1}}\| w_s\|_{L^2},
$$
which gives
$$
\gamma_p\|\nabla_x\phi_s\cdot w_s\|_{L^1}
\lesssim
\dot X_0^{1-\frac{d}{2s}+\frac{1}{s}}
\dot X_s^{2+\frac{d}{2s}-\frac{1}{s}}.
$$

Finally
$$
\|\dot\Lambda^s(\tildrho^{\frac{2}{\gamma-1}} A)\|_{L^2}
\leq
\|\dot\Lambda^s(\tildrho^{\frac{2}{\gamma-1}} )\|_{L^2}\|A\|_{L^2}
+\|\tildrho^{\frac{2}{\gamma-1}} \|_{L^2}\|A_s\|_{L^2}
$$
$$
\leq
\|\tildrho^{\frac{2}{\gamma-1}} \|_{\dot H^s}\|A\|_{L^2}
+\|\tildrho^{\frac{2}{\gamma-1}} \|_{L^2}\|A_s\|_{L^2}.
$$
But
$$
\|\tildrho^{\frac{2}{\gamma-1}} \|_{\dot H^s}
\leq
\|\tildrho\|_{L^\infty}^{\frac{2}{\gamma-1}-1} \|\tildrho_s\|_{L^2},
$$
and
$$
\|\tildrho^{\frac{2}{\gamma-1}} \|_{L^2}
\leq
\|\tildrho\|_{L^q}^{\frac{2}{\gamma-1}},
$$
with $q=\frac{4}{\gamma-1}$, and by interpolation
$$
\|\tildrho\|_{L^q}^{\frac{2}{\gamma-1}}
\leq
\left( \|\tildrho\|_{L^2}^{1-\delta}\| \tildrho \|_{\dot H^s}^{\delta}
\right)^{\frac{2}{\gamma-1}},
$$
with $\delta=\frac{d}{s}\left(\frac{1}{2}-\frac{1}{q}\right)$.

Therefore,
$$
\frac{\tilde \nu_0}{\varrho_c}\|\dot\Lambda^s(\tildrho^{\frac{2}{\gamma-1}} A\|_{L^2}\|A_s\|_{L^2}
\lesssim
\|\tildrho\|_{L^\infty}^{\frac{2}{\gamma-1}-1} \|\tildrho_s\|_{L^2}
\|A\|_{L^2}\|A_s\|_{L^2}
+\left( \|\tildrho\|_{L^2}^{1-\delta}\| \tildrho \|_{\dot H^s}^{\delta}
\right)^{\frac{2}{\gamma-1}}
\|A_s\|_{L^2}^2.
$$
So we end up  with
$$
\frac{\tilde \nu_0}{\varrho_c}\|\dot\Lambda^s(\tildrho^{\frac{2}{\gamma-1}} A\|_{L^2}\|A_s\|_{L^2}
\lesssim
\dot X_0^{1+(1-\frac{d}{2s})(\frac{2}{\gamma-1}-1)}
\dot X_s^{2+\frac{d}{2s}(\frac{2}{\gamma-1}-1)}
+
\dot X_0^{(1-\delta)\frac{2}{\gamma-1}}
\dot X_s^{2+\frac{2\delta}{\gamma-1}}.
$$

Then, plugging these inequalities and those of Proposition \ref{p:Burgers} in \eqref{eq:L2} and \eqref{eq:Hs2} yields
$$
\frac d{dt}\dot X_0+\frac{c_{d,\gamma}}{1+t}\dot X_0\lesssim \frac{\dot X_0}{(1+t)^2}+\dot X_0^{2-\frac1s(\frac d2+1)}\dot X_s^{\frac1s(\frac d2+1)},
$$
and
$$
\frac d{dt}\dot X_s+\frac{c_{d,\gamma,s}}{1+t}\dot X_s\lesssim \frac{\dot X_s}{(1+t)^2}+\frac{\dot X_0^{1-\frac d{2s}}\dot X_s^{\frac d{2s}}}{(1+t)^{s+2-\frac d2}}
+\dot X_0^{1-\frac 1s(\frac d2+1)}\dot X_s^{1+\frac1s(\frac d2+1)}+\frac{\dot X_0^{\frac1s}\dot X_s^{1-\frac1s}}{(1+t)^3}
$$
$$
+\dot X_0^{1-\frac{d}{2s}+\frac{1}{s}}
\dot X_s^{2+\frac{d}{2s}-\frac{1}{s}}
+
\dot X_0^{1+(1-\frac{d}{2s})(\frac{2}{\gamma-1}-1)}
\dot X_s^{2+\frac{d}{2s}(\frac{2}{\gamma-1}-1)}
+
\dot X_0^{(1-\delta)\frac{2}{\gamma-1}}
\dot X_s^{2+\frac{2\delta}{\gamma-1}}.
$$

Since we expect to have  $\dot X_\sigma\lesssim (1+t)^{-c_{d,\gamma,\sigma}}$ for all $\sigma\in[0,s],$
it is natural  to introduce the function $\dot Y_\sigma:=(1+t)^{c_{d,\gamma,\sigma}}\dot X_\sigma.$
However, for technical reasons, we proceed as in \cite{G2} and  work with 
\begin{equation}\label{eq:Y}
\dot Y_\sigma:=(1+t)^{c_{d,\gamma,\sigma}-a}\dot X_\sigma\quad\hbox{for some }\ a>1.\end{equation}
Then, observing that
$$
\frac d{dt}\dot Y_\sigma +\frac a{1+t}\dot Y_\sigma
=(1+t)^{c_{d,\gamma,\sigma}-a}\biggl(\frac d{dt}\dot X_\sigma+\frac{c_{d,\gamma,\sigma}}{1+t}\dot X_\sigma\biggr),
$$
the above inequalities for $\dot X_0$ and $\dot X_s$ lead us to
$$
\frac d{dt}\dot Y_0+\frac{a}{1+t}\dot Y_0\lesssim \frac{\dot Y_0}{(1+t)^2}
+\frac{\dot Y_0^{2-\frac1s(\frac d2+1)}\dot Y_s^{\frac1s(\frac d2+1)}}{(1+t)^{1+\frac d2+c_{d,\gamma}-a}},
$$
and
$$
\frac d{dt}\dot Y_s+\frac{a}{1+t}\dot Y_s\lesssim \frac{\dot Y_s}{(1+t)^2}+\frac{\dot Y_0^{1-\frac d{2s}}\dot Y_s^{\frac d{2s}}}{(1+t)^{2}}
+\frac{\dot Y_0^{1-\frac 1s(\frac d2+1)}\dot Y_s^{1+\frac1s(\frac d2+1)}}{(1+t)^{1+\frac d2+c_{d,\gamma}-a}}
+\frac{\dot Y_0^{\frac1s}\dot Y_s^{1-\frac1s}}{(1+t)^2}.
$$

 Introducing the notation 
$Y_\sigma:=\sqrt{\dot Y_0^2+\dot Y_\sigma^2},$ for any $\sigma \in [0,s]$, we get
\begin{align}\label{eq:Y_s}
\frac d{dt}Y_s+\frac a{1+t}Y_s
&\lesssim\frac{Y_s}{(1+t)^2}+\frac{Y_s^2}{(1+t)^{1+\frac d2+c_{d,\gamma}-a}} \\
& +\frac{Y_s^3}{(1+t)^{s+\frac{d}{2}+2(c_{d,\gamma}-a)}}
+
\frac{Y_s^{2+\frac{2}{\gamma-1}}}{(1+t)^{s+\frac{d}{2}(\frac{2}{\gamma-1}-1)+(c_{d,\gamma}-a)(\frac{2}{\gamma-1}+3)}}
\notag \\
&
+
\frac{Y_s^{2+\frac{2\delta}{\gamma-1}}}
{
(1+t)^{(1+\frac{2\delta}{\gamma-1})s+(c_{d,\gamma}-a)(\frac{2}{\gamma-1}+1)}
}. \notag 
\end{align} 
At this stage, one can take $a=1+\frac d2+c_{d,\gamma}$, so that $a>1$ holds true as soon as $\gamma>1.$
Then, we eventually get the differential inequality 
\begin{align}\label{eq:Y_s_2}
\frac{d}{dt}Y_s+\frac{a}{1+t}Y_s
&\lesssim\frac{Y_s}{(1+t)^2}+Y_s^2+\frac{Y_s^3}{(1+t)^{s+\frac{d}{2}+2(c_{d,\gamma}-a)}}
\\
&+
\frac{Y_s^{2+\frac{2}{\gamma-1}}}{(1+t)^{s+\frac{d}{2}(\frac{2}{\gamma-1}-1)+(c_{d,\gamma}-a)(\frac{2}{\gamma-1}+3)}}
+
\frac{Y_s^{2+\frac{2\delta}{\gamma-1}}}{(1+t)^{s(1+\frac{2\delta}{\gamma-1})+(c_{d,\gamma}-a)(\frac{2}{\gamma-1}+1)}}, \notag
\end{align}
where $a-c_{d,\gamma}=1+\frac{d}{2}$.

Now,  one may 
apply  Lemma \ref{l:ODE}, setting
$m_1=2$,
$m'_1=1- s-\frac{d}{2}+2(a-c_{d,\gamma})$,
$m_2=1+\frac{2}{\gamma-1}$,
$m'_2=1-s-\frac{d}{2}(\frac{2}{\gamma-1}-1)+(a-c_{d,\gamma})(\frac{2}{\gamma-1}+3)$,
$m_3=1+\frac{2\delta}{\gamma-1}$ and
$m'_3=1-s(1+\frac{2\delta}{\gamma-1})+(a-c_{d,\gamma})(\frac{2}{\gamma-1}+1)$,
and eventually get, provided $\|(\tildrho_0,w_0,A_0)\|_{H^s}$ is small enough:
\begin{equation}\label{eq:poissonHs}
\sqrt{(1+t)^{2s}\|(\tildrho,w,A)\|_{\dot H^s}^2+\|(\tildrho,w,A)\|_{L^2}^2}\leq 2\frac{e^{\frac{Ct}{1+t}}}{(1+t)^{c_{d,\gamma}}} \|(\tildrho_0,w_0,A_0)\|_{H^s}.
\end{equation}
Let us emphasize that in order to apply Lemma \ref{l:ODE}, we need  $a>1$ and 
$m'_j<m_ja,$ for $j=1,2,3$. After an elementary computation, one observes that the first condition is clearly satisfied while the second set of conditions leads first to
$$
s>1+\frac{d}{2},
$$
then to
$$
s>2d+3,
$$
and finally to
\begin{equation}
\label{eq:condPs}
s>s_+:=\frac{1}{2}[-\beta+\sqrt{\alpha^2+4\beta}],
\end{equation}
where 
$\alpha=1+\frac{2}{\gamma-1}+d(1+\frac{(\gamma-1)(\gamma-2)}{2(\gamma-1)})$ and 
$\beta=\frac{2d}{\gamma-1}(\frac{1}{2}-\frac{\gamma-1}{4})$.

In fact, one can conclude that \eqref{eq:poissonHs} holds true whenever $\gamma>1$ and
\begin{equation}
\label{eq:condP}
s>\max\{2d+3,s_+\}.
\end{equation}
This concludes the proof for Theorem \ref{decay}.  



\section{Proof of  Theorem \ref{isentro}}\label{Proof-main-result}

Let us give the sketch of the proof for global existence
of  \eqref{i1g}--\eqref{i0g}, and 
then establish  uniqueness by means of a classical energy method.

\subsection{Existence} 

Here we are given that $(\varrho_0,u_0,A_0)$ satisfy the assumptions of Theorem \ref{isentro}. 
One can observe that  proving the existence of a global-in-time solution $(\varrho, u, A)$ for the  system  \eqref{i1g}--\eqref{i0g} is equivalent to prove the same for the system \eqref{eq:MP} (after the change of variable \eqref{eq:mak}). This further leads to  prove   the existence of global-in-time solution for  the system \eqref{eq:BB}.

\subsubsection*{\underline{Step 1}: Solving an approximate system}

Fix some cut-off function $\chi\in \cC^\infty_c(\R^d)$ supported in, say, the ball $B(0,4/3)$ 
with  $0\leq \chi \leq 1$ in $B(0,4/3)$ and  $\chi=1$ in $B(0,3/4).$ 
Set $v^n:=\chi(n^{-1}\cdot)\,v.$ 
Let $J_n$ be the Friedrichs' truncation operator 
defined by  $J_n z:=\cF^{-1}(\mathds{1}_{B(0,n)} \cF z).$ 

For all $n\geq1$, we consider the following regularization  of \eqref{eq:BB}:
\begin{align}\tag{BB$_n$}
\begin{cases} 
\ds \partial_t\tilde\varrho+J_n((v^n\!+\!J_nw)\cdot\nabla_x J_n\tilde\varrho)+\frac{\gamma-1}2J_n(J_n\tilde\varrho\,\div_x(v^n\!+\!J_nw))=0,\\
\ds \partial_tw+J_n((v^n\!+\!J_nw)\cdot\nabla_x J_nw)+\frac{\gamma-1}2J_n(J_n\tilde\varrho\,\nabla_x J_n\tilde\varrho)+J_n(J_nw\cdot\nabla_x v^n)\\
\hspace{6cm} \ds = -\gamma_p\nabla_x J_n\phi,\\
\ds \frac{2i}{c}\partial_t A+\frac{1}{k_0}\Delta_x A
+\zeta A
-\zeta' J_n \left(\tilde\varrho^{\frac{2}{\gamma-1}} A\right)=0,
\end{cases}\label{eq:BB_n} 
\end{align} 
 supplemented with initial data $(J_n\tilde\varrho_0,J_nw_0,J_nA_0).$ 
\medbreak
Note that $v^n$ is in $\cC(\R_+;H^{s+1}).$ Hence the above system may be seen as an ODE in $L^2(\R^d;\R\times\R^d).$ Applying the standard Cauchy-Lipschitz theorem thus 
ensures that there exists a unique maximal solution $(\tilde \varrho^n,w^n,A^n)\in\cC^1([0,T^n);L^2)$ to \eqref{eq:BB_n}. 

Now, from  $J_n^2=J_n,$ we deduce that  $(J_n\tilde\varrho^n,J_nw^n,J_nA^n)$ also satisfies \eqref{eq:BB_n}. Hence, uniqueness of the solution 
entails that $J_n\tilde\varrho^n=\tilde\varrho^n$, $J_nw^n=w^n$ and $J_nA^n=A^n$.
 In other words, $(\tilde\varrho^n,w^n, A^n)$ is spectrally localized in the ball $B(0,n),$
and one can thus assert that  $(\tilde\varrho^n,w^n,A^n )\in\cC^1([0,T^n);H^\sigma)$ for all $\sigma\in \R,$ 
and  actually satisfies: 
\begin{align}
\begin{cases} 
\ds \partial_t \tilde\varrho^n +J_n((v^n\!+\!w^n)\cdot\nabla_x\tilde\varrho^n)+\frac{\gamma-1}2J_n(\tilde\varrho^n\,\div_x(v^n\!+\!w^n))=0,\\
\ds \partial_t w^n + J_n((v^n\!+\!w^n)\cdot\nabla_x w^n)+\frac{\gamma-1}2J_n(\tilde\varrho^n\,\nabla_x\tilde\varrho^n)+J_n(w^n\cdot\nabla_x v^n)\\
\hspace{6cm}\ds =-\gamma J_n\nabla_x \phi,\\
\ds \frac{2i}{c}\partial_t A^n+\frac{1}{k_0}\Delta A^n
+\zeta A^n
-\zeta'J_n\left(\tilde\varrho^{\frac{2}{\gamma-1}} A\right) = 0,\\
(\tilde\varrho^n,w^n,A^n)|_{t=0}=(J_n\tilde\varrho_0,J_nw_0,J_nA_0)).
\end{cases}\label{eq:J-BB_n}
\end{align}

\subsubsection*{\underline{Step 2}: Uniform a-priori estimates in the solution space}
Since  $J_n$ is an orthogonal projector in any Sobolev space and  $(J_n\tilde\varrho^n,J_n w^n,J_nA^n)=(\tilde\varrho^n,w^n,A^n),$
 one can repeat verbatim (and rigorously) the computations of Section \ref{aux}. The only change is that 
 since $Dv^n=Dv +{\mathcal O}(n^{-1}),$  the final estimates therein
 only hold in the time interval $[0,\min(cn,T_n))$ for some $c>0,$
 which eventually implies that  $T_n\geq cn.$
 

The conclusion of this step is that for any fixed $T>0,$ the triplet 
 $(\tilde\varrho^n,w^n, A^n)$  for $n$ large enough is defined on $[0,T],$  
 belongs to $\cC^1([0,T];H^\sigma)$ for all $\sigma\in\R$ and is
bounded in $L^\infty(0,T; H^s).$


\subsubsection*{\underline{Step 3}: Convergence}    Let us fix some $T>0.$ 
Given the uniform bounds of the previous step, the weak$^*$ compactness theorem 
ensures that (up to a subsequence),  there exists some $(\tilde\varrho,w,A) \in L^\infty(0,T ; H^s)$ such that 
$$(\tilde\varrho^n,w^n,A^n)\rightharpoonup (\tilde\varrho,w,A) \ \hbox{ weakly}^* \ \hbox{ in }\  L^\infty(0,T;H^s).$$
Furthermore, computing $\partial_t\tilde\varrho^n$, $\partial_t w^n$ and $\partial_t A^n$ by means of \eqref{eq:J-BB_n} and using 
standard product laws in Sobolev spaces, one can prove that  for all $\theta\in \cC^\infty_c(\R^d),$ 
$(\theta\partial_t\tilde\varrho^n,\theta\partial_t w^n,\theta\partial_t A^n)$ is bounded in $L^\infty(0,T; H^{s-1}).$ 
Hence, from  the Aubin-Lions lemma, interpolation and Cantor diagonal process, we obtain that (still up to a subsequence), 
$$(\theta\tilde\varrho^n,\theta w^n,\theta A^n)\to (\theta\tilde\varrho,\theta w,\theta A)\quad\hbox{in}\quad L^\infty(0,T;H^{s'})
\quad\hbox{for all }\ \theta\in \cC^\infty_c(\R^d)\andf s'<s.$$
This allows to pass to the limit in $\eqref{eq:J-BB_n}$ and to conclude that 
$(\tilde\varrho,w,A)$ satisfies \eqref{eq:BB} on $[0,T]\times\R^d.$ Of course, since $T>0$ is arbitrary, $(\tilde\varrho,w,A)$
actually satisfies  \eqref{eq:BB} on $\R_+\times\R^d$ and belongs to $L^\infty_{loc}(\R_+ ; H^s).$

\subsubsection*{\underline{Step 4}: Time continuity}  
That $(\tilde\varrho,w,A)$ lies in $\cC(\R_+;H^s)$ may be achieved either by adapting the arguments of Kato in \cite{K}
or those of \cite[Chap. 4]{BCD}. 
Note that ref. \cite{K} allows in addition to prove the continuity of the flow map in the space $\cC(\R_+;H^s).$

\subsection{Uniqueness}

This part is devoted to the proof of the  uniqueness result of the solution in terms of the following proposition.

\begin{prop}\label{locunik}
Let  the assumptions of Theorem \ref{isentro} be in force and 
assume that $(\varrho_1,u_1,A_1)$ and $(\varrho_2,u_2,A_2)$ are two solutions of \eqref{eq:MP} on $[0,T]\times\R^d$
such that $(\varrho_i)^{\frac{\gamma-1}2} \in \cC([0,T];L^2\cap L^\infty),$
$\nabla_x\varrho_i\in L^\infty\left((0,T)\times\R^d\right),$ 
$(u_i-v_i)\in \cC([0,T);L^2)$ and $\nabla_x u_i\in L^1(0,T ; L^\infty)$ for $i=1,2.$

\smallskip 

Then, if $(\varrho_1,u_1,A_1)$ and $(\varrho_2,u_2,A_2)$ coincide at time $t=0$  (the initial time), the two solutions are the same on $[0,T]\times\R^d.$ 
\end{prop}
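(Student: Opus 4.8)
The proof will be a Gr\"onwall-type energy estimate on the difference of the two solutions, carried out in the Makino variables \eqref{eq:mak}. The first step is to pass from $(\varrho_i,u_i,A_i)$ to $(\tildrho_i,u_i,A_i)$ solving \eqref{eq:MP} (legitimate under the stated regularity) and to fix a reference field $v$ common to $u_1$ and $u_2$, so that $\delta\varrho:=\tildrho_1-\tildrho_2$, $\delta u:=u_1-u_2$ and $\delta A:=A_1-A_2$ all belong to $\cC([0,T];L^2)$, with $\delta\varrho$ moreover bounded. Subtracting the two copies of \eqref{eq:MP} produces a \emph{linear} system for $(\delta\varrho,\delta u,\delta A)$: a transport equation along $u_1$ for $\delta\varrho$ with source $\delta u\cdot\nabla_x\tildrho_2+\tfrac{\gamma-1}2(\tildrho_2\,\Div\delta u+\delta\varrho\,\Div u_1)$; a transport equation along $u_1$ for $\delta u$ with source $\delta u\cdot\nabla_x u_2+\tfrac{\gamma-1}2(\tildrho_2\,\nabla_x\delta\varrho+\delta\varrho\,\nabla_x\tildrho_1)+\gamma_p\nabla_x(A_1^2-A_2^2)$; and a Schr\"odinger equation for $\delta A$ with source $\zeta'\bigl(\tildrho_1^{\frac{2}{\gamma-1}}A_1-\tildrho_2^{\frac{2}{\gamma-1}}A_2\bigr)$.

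Next I would take the $L^2$ scalar product of the first two equations with $(\delta\varrho,\delta u)$. The transport terms give, after one integration by parts, contributions bounded by $\|\Div u_i\|_{L^\infty}\|(\delta\varrho,\delta u)\|_{L^2}^2$. The key point — and the reason for using Makino's symmetrization — is the pairing of the two formally dangerous first‑order terms $\tfrac{\gamma-1}2\tildrho_2\,\Div\delta u$ (tested against $\delta\varrho$) and $\tfrac{\gamma-1}2\tildrho_2\,\nabla_x\delta\varrho$ (tested against $\delta u$): integrating the latter by parts rewrites it as $-\tfrac{\gamma-1}2\int\tildrho_2\,\delta\varrho\,\Div\delta u-\tfrac{\gamma-1}2\int\delta\varrho\,\nabla_x\tildrho_2\cdot\delta u$, so the $\tildrho_2\,\Div\delta u$ contributions cancel exactly and only $\tfrac{\gamma-1}2\int\delta\varrho\,\nabla_x\tildrho_2\cdot\delta u$ survives, bounded by $\|\nabla_x\tildrho_2\|_{L^\infty}\|(\delta\varrho,\delta u)\|_{L^2}^2$. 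The remaining terms $\delta u\cdot\nabla_x\tildrho_2$, $\delta\varrho\,\Div u_1$, $\delta u\cdot\nabla_x u_2$ and $\delta\varrho\,\nabla_x\tildrho_1$ are absorbed likewise by $\bigl(\|\nabla_x u_i\|_{L^\infty}+\|\nabla_x\tildrho_i\|_{L^\infty}\bigr)\|(\delta\varrho,\delta u)\|_{L^2}^2$. For the electromagnetic forcing, I would use $A_1^2-A_2^2=\Re\bigl(\delta A\cdot\overline{A_1+A_2}\bigr)$ and integrate by parts to remove the derivative from $\delta A$:
\[
-\gamma_p\int_{\R^d}\nabla_x(A_1^2-A_2^2)\cdot\delta u\,dx
=\gamma_p\int_{\R^d}(A_1^2-A_2^2)\,\Div\delta u\,dx
\lesssim\|\Div\delta u\|_{L^\infty}\,\|\delta A\|_{L^2}\bigl(\|A_1\|_{L^2}+\|A_2\|_{L^2}\bigr),
\]
which uses only $\Div\delta u\in L^\infty$, i.e. exactly the assumption $\nabla_x u_i\in L^1(0,T;L^\infty)$, and no $\dot H^1$ bound on $\delta A$.

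For the third equation I would take the $L^2$ product with $\overline{\delta A}$ and keep the imaginary part: the Laplacian and the real part $k_0$ of $\zeta$ drop out, the term $\mathrm{Im}\,\zeta=\eta_0>0$ contributes the favorable damping $+\eta_0\|\delta A\|_{L^2}^2$, and the coupling source is split as $\tildrho_1^{\frac{2}{\gamma-1}}A_1-\tildrho_2^{\frac{2}{\gamma-1}}A_2=\tildrho_1^{\frac{2}{\gamma-1}}\delta A+\bigl(\tildrho_1^{\frac{2}{\gamma-1}}-\tildrho_2^{\frac{2}{\gamma-1}}\bigr)A_2$ and estimated, using the $L^\infty$ bounds on the $\tildrho_i$ (in particular $\|\tildrho_1^{\frac{2}{\gamma-1}}-\tildrho_2^{\frac{2}{\gamma-1}}\|_{L^2}\lesssim\|\delta\varrho\|_{L^2}$), by $\lesssim\bigl(\|\tildrho_1\|_{L^\infty}^{\frac{2}{\gamma-1}}+\|A_2\|_{L^\infty}\bigr)\|(\delta\varrho,\delta A)\|_{L^2}^2$. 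Collecting everything, with $\cE(t):=\|(\delta\varrho,\delta u,\delta A)(t)\|_{L^2}^2$, I expect to reach an inequality of the form
\[
\frac{d}{dt}\cE(t)\le C\,g(t)\,\cE(t),\qquad
g:=1+\|(\nabla_x u_1,\nabla_x u_2)\|_{L^\infty}+\|(\nabla_x\tildrho_1,\nabla_x\tildrho_2)\|_{L^\infty}+\|(A_1,A_2)\|_{L^\infty},
\]
with $g\in L^1(0,T)$ by the hypotheses of the proposition. Since the two solutions coincide at $t=0$ one has $\cE(0)=0$, and Gr\"onwall's lemma then forces $\cE\equiv0$ on $[0,T]$, hence $\tildrho_1\equiv\tildrho_2$, $u_1\equiv u_2$ and $A_1\equiv A_2$, and therefore $\varrho_1\equiv\varrho_2$, which is the claim.

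The only genuinely delicate point, in my view, is closing the estimate for the electromagnetic coupling at the $L^2$ level: the hyperbolic part of the system propagates only $L^2$ control on $\delta u$ (no $\dot H^1$), so the forcing $\nabla_x(A_1^2-A_2^2)$ must be integrated by parts to throw the derivative onto $\Div\delta u$ and onto the $A_i$; this is precisely what makes $\nabla_x u_i\in L^1(0,T;L^\infty)$ — rather than something weaker — the natural hypothesis. The rest is a routine hyperbolic energy estimate of the type already carried out in Section \ref{aux}.
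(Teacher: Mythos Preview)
Your overall strategy --- energy estimate in the Makino variables, exploiting the symmetrization to cancel the first-order cross terms, an $L^2$ estimate on $\delta A$ from the Schr\"odinger equation, then Gr\"onwall --- matches the paper's, and the hyperbolic part of your argument is correct. The gap is precisely at the point you flagged as ``the only genuinely delicate point'': your integration-by-parts treatment of the electromagnetic forcing does not close.

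After integrating by parts you bound
\[
\gamma_p\int_{\R^d}(A_1^2-A_2^2)\,\Div\delta u\,dx
\;\lesssim\;\|\Div\delta u\|_{L^\infty}\,\|\delta A\|_{L^2}\,\bigl(\|A_1\|_{L^2}+\|A_2\|_{L^2}\bigr).
\]
The right-hand side contains only \emph{one} factor controlled by $\sqrt{\cE}$, namely $\|\delta A\|_{L^2}$. Although $\Div\delta u$ is formally a difference, $\|\Div\delta u\|_{L^\infty}$ is not dominated by $\sqrt{\cE}$: your energy carries only $\|\delta u\|_{L^2}$, and the sole available bound is $\|\Div\delta u\|_{L^\infty}\le\|\nabla_x u_1\|_{L^\infty}+\|\nabla_x u_2\|_{L^\infty}$, an order-one quantity. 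Thus this term contributes $h(t)\sqrt{\cE}$ with $h\not\equiv0$, and your claimed inequality $\frac d{dt}\cE\le Cg(t)\cE$ does not follow; you obtain instead $\frac d{dt}\cE\le Cg(t)\cE+Ch(t)\sqrt{\cE}$, from which Gr\"onwall with $\cE(0)=0$ no longer yields $\cE\equiv0$. In short, integrating by parts trades the unwanted $\nabla_x\delta A$ for an equally unwanted $\Div\delta u$, and neither belongs to the $L^2$ energy.

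The paper does \emph{not} integrate by parts here. It keeps the contribution as $\gamma_p\|\nabla_x\phi_\delta\|_{L^2}\|w_\delta\|_{L^2}$, which is quadratic in differences provided one can show $\|\nabla_x\phi_\delta\|_{L^2}\lesssim K(t)\bigl(\|\vr_\delta\|_{L^2}+\|A_\delta\|_{L^2}\bigr)$. Since $\nabla_x\phi_\delta$ involves $\nabla_x A_\delta$, this requires an auxiliary $\dot H^1$ bound on $A_\delta$; the paper extracts it directly from the Schr\"odinger equation for $A_\delta$ (Duhamel/semigroup representation, using $A_\delta|_{t=0}=0$), estimates the source $\tildrho_1^{\frac{2}{\gamma-1}}A_1-\tildrho_2^{\frac{2}{\gamma-1}}A_2$ in terms of $\|\vr_\delta\|_{L^2}$ and $\|A_\delta\|_{L^2}$, and then feeds $\|\nabla_x A_\delta\|_{L^2}\lesssim K(t)(\|\vr_\delta\|_{L^2}+\|A_\delta\|_{L^2})$ back into the Gr\"onwall inequality. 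That is the step your proposal is missing.
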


\noindent 
{\bf Proof.}  We define $w_i = u_i-v_i$ for $i=1,2$, and
$$(\vr_\delta,w_\delta, \phi_\delta, A_\delta):=(\varrho_2-\varrho_1,w_2-w_1,\phi_2-\phi_1,A_2-A_1),$$
where $\phi_{i}:=(A_{i})^2$ for $i=1,2$.

Since $(\varrho_1,u_1,A_1)$ and $(\varrho_2,u_2,A_2)$ coincide at
time $t=0,$ so do $v_1$ and $v_2$. Hence, uniqueness for Burgers equation implies $v_1=v_2:=v$. We thus have  
\begin{align}
\begin{cases} 
\ds (\partial_t+w_2\cdot\nabla_x)\vr_\delta+\frac{\gamma-1}2\varrho_2\,\div_xw_\delta+v\cdot\nabla_x\vr_\delta+\frac{\gamma-1}2\vr_\delta\,\div_x v \\
\hspace{6cm} \ds =
-w_\delta\cdot\nabla_x\varrho_1-\frac{\gamma-1}2\vr_\delta\,\div_x w_1,\\
\ds (\partial_t+w_2 \cdot\nabla_x) w_\delta+\frac{\gamma-1}2\varrho_2\,\nabla_x\vr_\delta + v\cdot\nabla_x w_\delta+w_\delta\cdot\nabla_x v \\
\hspace{6cm}\ds =-w_\delta\cdot\nabla_x w_1 
         -\frac{\gamma-1}2\vr_\delta \nabla_x\varrho_1+\gamma_p\nabla_x\phi_\delta,\\
\ds \frac{2i}{c}\partial_t  A_\delta+\frac{1}{k_0}\Delta_x  A_\delta+\zeta  A_\delta
-\zeta'\left((\varrho_2)^{\frac{2}{\gamma-1}} A_2-(\varrho_1)^{\frac{2}{\gamma-1}} A_1\right)=0 .
\end{cases}
\end{align} 

Taking the $L^2$ scalar product of the first and second equations with $\vr_\delta$, $w_\delta,$ respectively,
and arguing as for proving  \eqref{eq:L20}, we get 
\begin{align}  \label{eq:uniq-1} 
\frac12 \frac{d}{dt}
\|(\vr_\delta,w_\delta)\|_{L^2}^2=\frac12\int_{\R^d}\left(\vr_\delta^2 + |w_\delta|^2\right)\div_x(v+w_2)\,dx
+\frac{\gamma-1}2
\int_{\R^d}\vr_\delta\,w_\delta\cdot\nabla_x (\varrho_2-\varrho_1)\,dx   \notag \\
-\int_{\R^d} \vr_\delta w_\delta\cdot\nabla_x\varrho_1 \, dx - \frac{\gamma-1}2\int_{\R^d}(\vr_\delta)^2\div_x(v+w_1)\,dx \notag \\
- \int_{\R^d}\left(w_\delta\cdot\nabla_x(v+w_1)\right)\cdot w_\delta \,dx +\gamma_p \int_{\R^d}\nabla_x \phi_\delta\cdot \overline{w_\delta} \, dx.
\end{align}
In the same fashion, taking $L^2$ scalar product the third equation with $A_\delta$, we get
\begin{align} 
\frac{1}{c}\frac d{dt}
\|A_\delta\|_{L^2}^2+\eta_0\| A_\delta\|_{L^2}^2 + \Re \int_{\R^d} i \zeta'\left((\varrho_2)^{\frac{2}{\gamma-1}} A_2-(\varrho_1)^{\frac{2}{\gamma-1}} A_1\right)\cdot \overline{A_\delta}\ dx=0.
\end{align}

Now adding these inequalities we can find that, 
$$
\frac d{dt}\|(\vr_\delta, w_\delta,A_\delta)\|_{L^2}^2\leq C
\|\nabla_x v,\nabla_x w_1,\nabla_x w_2,\nabla_x\varrho_1,\nabla_x\varrho_2\|_{L^\infty}
\|(\vr_\delta,w_\delta)\|_{L^2}^2+\gamma_p\|\nabla_x \phi_\delta\|_{L^2}\|w_\delta\|_{L^2}.
$$
After time integration, this gives  for all $t\in[0,T]$ (since $\varrho_\delta |_{t=0}=0$ and $w_\delta |_{t=0}=0$): 
\begin{align}\label{gro1}
&\|(\vr_\delta,w_\delta,A_\delta\|_{L^2})(t)\|^2_{L^2} \\ 
& \leq C\int_0^t\|(\nabla_x v,\nabla_x w_1,\nabla_x w_2,\nabla_x\varrho_1,\nabla_x\varrho_2)(\tau)\|_{L^\infty}
\|(\vr_\delta, w_\delta)(\tau)\|^2_{L^2}\,d\tau \notag  \\ 
& \quad +\gamma_p\int_0^t \|\nabla_x\phi_\delta(\tau)\|_{L^2} \|w_\delta(\tau)\|_{L^2} \, d \tau.
\notag 
\end{align} 
Rewriting the Schr\"{o}dinger equation for $A_\delta$, we have
$$
\frac{2i}{c}\partial_t A_\delta+\frac{1}{k_0}\Delta_x A_\delta+\zeta A_\delta =
\zeta^\prime \left[\left(\varrho_2^{\frac{2}{\gamma-1}} - \varrho_1^{\frac{2}{\gamma-1}}\right)A_2
+\varrho_1^{\frac{2}{\gamma-1}}A_\delta\right] .
$$
Now using the
 identity $$(\varrho_2)^{\frac2{\gamma-1}}-(\varrho_1)^{\frac2{\gamma-1}}=\frac2{\gamma-1}\vr_\delta\int_0^1 (\varrho_1+\eta\vr_\delta)^{\frac2{\gamma-1}-1}\,d\eta, $$
 and using the fact that the operator $\nabla_x  S(t)$ (where $S(t)$ is the Schr\"{o}dinger semigroup) maps $L^2$ in $L^2$
we get 
$$\|\nabla_xA _\delta\|_{L^2}\leq C\left(\|\vr_\delta\|_{L^2}\biggl\|\int_0^1 (\varrho_1+\eta\vr_\delta)^{\frac2{\gamma-1}-1}\,d\eta\biggr\|_{L^\infty}+\|A_\delta\|_{L^2}\right).$$
Noting that $\gamma\leq 1+2d/(d+2),$ and $\varrho_i\in L^\infty(0,T;L^2\cap L^\infty)$ for $i=1,2,$ the above integral (the coefficient of $\|\varrho_\delta\|_{L^2}$) in the right-hand side is  bounded  in terms of $\varrho_1$ and $\varrho_2$. Thus, we have 
\begin{equation}\label{aux-gro2}
\|\nabla_x A_\delta\|_{L^2} \leq \mathcal K_{\varrho_1, \varrho_2} \|\varrho_\delta\|_{L^2} + C \|A_\delta\|_{L^2} ,
\end{equation}
where $\mathcal K_{\varrho_1,\varrho_2}$ may depend on time `$t$' but not in the space variable `$x$'. 

Since $\phi_\delta= (A_2)^2-(A_1)^2$ and using the fact $(A_i)^2=\sum_{j=1}^d A_{i,j} \overline{A_{i,j}}=A_i \cdot \overline{A_i}$, $i=1,2$ we find that
\begin{align*}
\nabla_x \phi_\delta &= \nabla_x (  A_2\cdot \overline{A_2} -A_1\cdot \overline{A_1}  ) \\
& = \nabla_x (A_\delta \cdot \overline{A_2}  +  A_1 \cdot \overline{A_\delta}) \\
& = (\nabla_x A_\delta ) \overline{A_2} + A_\delta (\nabla_x \overline{A_2}) + 
(\nabla_x A_1) \overline{A_\delta} + A_1 (\nabla_x \overline{A_\delta}) .
\end{align*}
Therefore, 
\begin{align} 
\|\nabla_x\phi_\delta\|_{L^2} 
&\leq  C_{A_1, A_2} \left(\|A_\delta\|_{L^2}  + \|\nabla_x A_\delta\|_{L^2}  \right)  \notag \\
& \leq \mathcal K_{\varrho_1,\varrho_2,A_1, A_2} \left( \|\varrho_\delta\|_{L^2} + \|A_\delta\|_{L^2}  \right),
\label{gro2}
\end{align}
where we have used the bound \eqref{aux-gro2}, and the quantity $K_{\varrho_1,\varrho_2,A_1, A_2}$ above may depend on  time `$t$' but not in the space variable `$x$'.

Using \eqref{gro2} in \eqref{gro1}, we get 
\begin{align*}
\|(\vr_\delta,w_\delta,A_\delta\|_{L^2})(t)\|^2_{L^2}\leq \int_0^t \mathcal K_{v, \varrho_1, \varrho_2, w_1, w_2, A_1, A_2, \gamma_p} (\tau)  \| (\varrho_\delta, w_\delta, A_\delta)(\tau)\|^2_{L^2} \, d \tau ,
\end{align*}
for some function $\mathcal K_{v, \varrho_1, \varrho_2, w_1, w_2, A_1, A_2, \gamma_p}$ depending on the quantities appearing in the subscript.
Then, applying Gr\"{o}nwall's lemma to the above inequality ensures the  uniqueness of the solution. 

This completes the proof of the proposition. \qed

\bigskip 

\subsection*{Acknowledgments.}
We thank Xavier Blanc and Rapha\"{e}l Danchin for fruitful discussions about the model. K. Bhandari and \v{S}. Ne{\v{c}}asov{\'{a}} received funding from   the  Praemium Academiae of \v{S}.  Ne{\v{c}}asov{\'{a}}. The Institute of Mathematics, CAS is supported by RVO:67985840.

\appendix 

\section{Some technical results}\label{appendix}

 In this part, we prove here some technical results that have been used in this paper.  Let us start with an ODE estimate.

\begin{lem}\label{l:ODE} Let $Y:\R_+\to\R_+$ satisfy the differential inequality 
$$\frac d{dt}Y+\frac a{1+t}Y\leq C\biggl(\frac Y{(1+t)^2}+Y^2+\sum_{j\leq J}(1+t)^{m'_j-1}Y^{m_j+1}\biggr)\quad\hbox{on }\ \R_+$$
for some $J>0$, $C>0,$ $a>1,$ $m_j>0$ and $m'_j<ma_j$, for $j\leq J$. 
Then, there exists $c=c(a,m_j,m_j',C)$ such that if $Y(0)\leq c,$ then we have
$$
Y(t)\leq 2e^{\frac{Ct}{1+t}}\frac{Y_0}{(1+t)^a}\quad\hbox{for all }\ t\geq0.
$$
\end{lem}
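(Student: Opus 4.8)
The plan is to prove Lemma~\ref{l:ODE} by a continuity (bootstrap) argument on the rescaled quantity $Z(t) := (1+t)^a Y(t)$. First I would rewrite the differential inequality in terms of $Z$: multiplying through by $(1+t)^a$ and using $\frac{d}{dt}Z = (1+t)^a\bigl(\frac{d}{dt}Y + \frac{a}{1+t}Y\bigr)$, one gets
\begin{equation*}
\frac{d}{dt}Z \leq C\biggl(\frac{Z}{(1+t)^2} + (1+t)^{-a}Z^2 + \sum_{j\leq J}(1+t)^{m_j'-1-m_j a}Z^{m_j+1}\biggr).
\end{equation*}
The point of the hypotheses is exactly that every time-weight on the right is integrable or decaying: $(1+t)^{-2}$ is integrable, $(1+t)^{-a}$ is integrable since $a>1$, and $(1+t)^{m_j'-1-m_j a}$ is integrable precisely because $m_j' < m_j a$ forces the exponent $m_j'-1-m_j a < -1$. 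So after this change of variables the equation is of the form $\frac{d}{dt}Z \leq C g(t)\,P(Z)$ with $g\in L^1(\R_+)$ and $P$ a polynomial with $P(0)=0$.

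Next I would set up the bootstrap. Fix the target $M := 2 e^{C\|g\|_{L^1}} Y_0$ (note $\int_0^\infty (1+t)^{-2}\,dt = 1$, which is where the clean constant $e^{Ct/(1+t)}$ in the statement comes from — more precisely one keeps the $(1+t)^{-2}$ term separate and absorbs it into an integrating-factor-type bound $e^{C\int_0^t(1+\tau)^{-2}d\tau} = e^{Ct/(1+t)}$, while the other terms are controlled by smallness). Define $T^* := \sup\{\,T\geq 0 : Z(t)\leq M \text{ on } [0,T]\,\}$; since $Z(0)=Y_0 < M$ and $Z$ is continuous, $T^*>0$. On $[0,T^*)$ we have $Z\leq M$, hence $P(Z) \leq C' (M + M^2 + \cdots)$, and the non-$(1+t)^{-2}$ part of the right-hand side is bounded by $C\,h(t)\,Q(M)$ with $h\in L^1$ and $Q$ a fixed polynomial. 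Integrating,
\begin{equation*}
Z(t) \leq Y_0 + C\int_0^t (1+\tau)^{-2} Z(\tau)\,d\tau + C\,Q(M)\,\|h\|_{L^1}.
\end{equation*}
A Grönwall argument on the middle term gives $Z(t) \leq \bigl(Y_0 + C Q(M)\|h\|_{L^1}\bigr) e^{Ct/(1+t)}$. Now if $Y_0 \leq c$ is small enough that $C Q(M)\|h\|_{L^1} \leq Y_0$ — which is possible because $Q(M) = Q(2e^{C}Y_0)$ is itself small with $Y_0$, so $CQ(M)\|h\|_{L^1}/Y_0 \to 0$ as $Y_0\to 0$ — then $Z(t) \leq 2 Y_0 e^{Ct/(1+t)} < M$ on $[0,T^*)$, strictly below the threshold. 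This strict inequality is incompatible with $T^* < \infty$ (by continuity $Z$ would have to reach $M$ at $T^*$), so $T^* = \infty$, and the bound $Z(t)\leq 2 e^{Ct/(1+t)} Y_0$, i.e. $Y(t)\leq 2 e^{Ct/(1+t)} Y_0 (1+t)^{-a}$, holds for all $t\geq 0$.

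The only delicate point is the choice of $c$: one must check that the map $Y_0 \mapsto CQ(2e^{C}Y_0)\|h\|_{L^1}$ is indeed $o(Y_0)$ as $Y_0\to 0$, which holds because $Q$ is a polynomial with $Q(0)=0$ and lowest-degree term of degree $\geq 2$ (every summand of $Q$ comes from $Z^2$ or $Z^{m_j+1}$ with $m_j>0$), so $Q(s) = O(s^2)$ near $0$; hence $CQ(2e^CY_0)\|h\|_{L^1} = O(Y_0^2) \leq Y_0$ for $Y_0$ below some explicit threshold $c = c(a, m_j, m_j', C)$. Everything else is the standard continuity-method bookkeeping, so I would present the change of variables and the bootstrap in full and leave the elementary estimate on $Q$ as a one-line remark.
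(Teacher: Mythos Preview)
Your bootstrap argument is correct and essentially matches the paper's approach; the only difference is cosmetic: the paper absorbs the linear $(1+t)^{-2}Z$ term by building the integrating factor directly into the substitution, setting $Z(t):=(1+t)^a e^{-Ct/(1+t)}Y(t)$ so that only the nonlinear terms survive and the bootstrap hypothesis becomes simply $Z\leq 2Z_0$, whereas you keep $Z=(1+t)^a Y$ and recover the factor $e^{Ct/(1+t)}$ afterwards via Gr\"onwall. One small slip to fix: the lemma only assumes $m_j>0$, so the exponent $m_j+1$ may lie in $(1,2)$ and $Q$ need not even be a polynomial; the correct claim is $Q(s)=o(s)$ as $s\to 0$ (each term is $O(s^{\min(2,\,1+m_j)})$), which is still exactly what you need to guarantee $CQ(2e^C Y_0)\|h\|_{L^1}\leq Y_0$ for $Y_0$ small.
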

\begin{proof}
We set $Z(t):=(1+t)^ae^{-\frac{Ct}{1+t}}Y(t)$ and observe that  the above differential inequality recasts in 
$$\frac d{dt} Z\leq C(1+t)^{-a}e^{\frac{Ct}{1+t}}Z^2
+C\sum_{j\leq J}(1+t)^{m_j'-m_ja-1}e^{\frac{Cmt}{1+t}}Z^{m_j+1},$$
which implies that
\begin{equation}\label{eq:Z1}\frac d{dt} Z\leq Ce^C(1+t)^{-a}Z^2+C\sum_{j\leq J}(1+t)^{m_j'-m_ja-1}e^{Cm_j}Z^{m_j+1}.\end{equation}
The conclusion stems from a  bootstrap argument : let $Z_0:=Z(0)$ and  assume that 
\begin{equation}\label{eq:Z2}
Z(t)\leq 2Z_0\quad\hbox{on}\quad [0,T].\end{equation}
 Then \eqref{eq:Z1} implies that
$$\frac d{dt} Z\leq 4Ce^C(1+t)^{-a}Z_0^2+C\sum_{j\leq J}(1+t)^{m_j'-m_ja-1}e^{Cm_j}(2Z_0)^{m_j+1}.$$
Hence, integrating in time, we discover that on $[0,T],$ we have
$$Z(t)\leq Z_0+\frac{4Ce^C}{a-1}Z_0^2\bigl(1-(1+t)^{1-a}\bigr)+C\sum_{j\leq J}\frac{e^{Cm_j}(2Z_0)^{m_j+1}}{m_ja-m_j'}\bigl(1-(1+t)^{m'_j-m_ja}\bigr)\cdotp$$
Let us discard the obvious case $Z_0=0.$ Then, if  $Z_0$ is so small as to satisfy 
$$\frac{4Ce^C}{a-1}Z_0+C\sum_{j\leq J}\frac{2^{m_j+1}Ce^{Cm_j} Z_0^m}{m_ja-m_j'}\leq 1,$$
the above inequality ensures that  we actually have $Z(t)<2Z_0$ on $[0,T]$. Therefore, the supremum of $T>0$ satisfying \eqref{eq:Z2} has to be infinite.
\end{proof}

We also used in the text the following first order  commutator estimate which
is a straightforward adaptation 
  to the homogeneous framework of the second inequality of
\cite[Lemma A.2]{BDD}:
\begin{lem}\label{l:com1} If $s>0,$ then we have:
$$\|[v,\dot\Lambda^s]u\|_{L^2}\lesssim
\|v\|_{\dot H^s}\|u\|_{L^\infty}+\|\nabla v\|_{L^\infty}\|u\|_{\dot H^{s-1}}.$$
\end{lem}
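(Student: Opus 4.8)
\textbf{Proof proposal for Lemma \ref{l:com1}.}

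The plan is to reduce this homogeneous commutator bound to its inhomogeneous counterpart, \cite[Lemma A.2]{BDD}, by exploiting the Littlewood--Paley characterization of homogeneous Sobolev spaces and the fact that the commutator structure interacts nicely with frequency localization. First I would recall the homogeneous Littlewood--Paley decomposition $f = \sum_{j \in \Z} \dot\Delta_j f$ and write the paraproduct decomposition of the product $vu$ in the Bony form, $vu = \dot T_v u + \dot T_u v + \dot R(v,u)$, so that $[v,\dot\Lambda^s]u = \dot\Lambda^s(vu) - v\dot\Lambda^s u$ decomposes into three corresponding pieces. The key point is that $\dot\Lambda^s$ acts by the Fourier multiplier $|\xi|^s$, which is smooth away from the origin and homogeneous of degree $s$; on a dyadic annulus $|\xi|\sim 2^j$ it behaves like $2^{js}$ up to a smooth bounded rescaling, which is exactly the structure needed to mimic the proof in \cite{BDD}.

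The main estimates go as follows. For the paraproduct $\dot T_v(\dot\Lambda^s u) - $ (the part of $\dot\Lambda^s \dot T_v u$ where the derivative falls on low-frequency $v$): here one uses that $[\dot S_{j-1}v, \dot\Lambda^s]\dot\Delta_j u$ has Fourier support in a ball of radius $\sim 2^j$, and a first-order Taylor expansion of the symbol $|\xi|^s$ gives a gain of one derivative on $v$ together with a factor $2^{j(s-1)}$ on $u$, yielding the term $\|\nabla v\|_{L^\infty}\|u\|_{\dot H^{s-1}}$ after summing in $\ell^2$ over $j$. For the symmetric paraproduct $\dot T_{(\cdot)}v$ piece and the remainder $\dot R(v,u)$, the commutator structure is not needed: one simply estimates $\|\dot\Lambda^s(\dot T_u v)\|_{L^2}$ and $\|v\dot\Lambda^s u\|_{L^2}$ separately by placing $u$ (or $\dot S_{j-1}u$) in $L^\infty$ and $v$ in $\dot H^s$, which produces $\|v\|_{\dot H^s}\|u\|_{L^\infty}$. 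One must be slightly careful that the low-frequency summations converge, which is where the hypothesis $s>0$ enters (it guarantees $\sum_{j} 2^{js}\|\dot\Delta_j u\|_{L^\infty}$-type series are controlled from below in frequency, or rather that the relevant geometric series in the remainder term sum). Assembling the three contributions and taking the $\ell^2(\Z)$ norm in $j$ gives the claimed inequality.

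The step I expect to be the main obstacle is the careful bookkeeping of the low-frequency ends of the dyadic sums: in the homogeneous setting there is no lowest frequency, so one has to verify that each series $\sum_{j\in\Z}(\cdots)$ is summable at $j\to-\infty$ and that no spurious low-frequency divergence appears. This is precisely the point where $s>0$ is used, and it is the only genuine difference from the inhomogeneous argument of \cite{BDD}; everything else is a verbatim transcription with $\dot\Delta_j$, $\dot S_j$, $\dot H^s$ in place of $\Delta_j$, $S_j$, $H^s$. Since the paper only needs this for $s$ strictly positive (indeed $s>1+d/2$), no subtlety about critical or negative indices arises, and the adaptation is routine modulo this check, as the statement of the lemma already advertises.
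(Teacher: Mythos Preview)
Your proposal is correct and matches the paper's approach exactly: the paper does not give a detailed proof of this lemma but simply declares it ``a straightforward adaptation to the homogeneous framework of the second inequality of \cite[Lemma A.2]{BDD},'' and your outline (Bony paraproduct decomposition, Taylor expansion of the symbol on the $\dot T_v u$ piece to extract $\|\nabla v\|_{L^\infty}\|u\|_{\dot H^{s-1}}$, direct estimation of the $\dot T_u v$ and remainder pieces giving $\|v\|_{\dot H^s}\|u\|_{L^\infty}$, and the use of $s>0$ to ensure summability at low frequencies) is precisely that adaptation.
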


 \medskip 

The following second order commutator inequality (proved in \cite[Lemma A.4]{BDDN}) played a key role in the proof of Sobolev estimates with non-integer exponent for the solution to \eqref{eq:BB}. 

\begin{lem}\label{l:com2} If $s>1,$ then we have:
$$\|[v,\dot\Lambda^s]u-s\nabla v\cdot\dot\Lambda^{s-2}\nabla u\|_{L^2}\lesssim
\|v\|_{\dot H^s}\|u\|_{L^\infty}+\|\nabla^2v\|_{L^\infty}\|u\|_{\dot H^{s-2}}.$$
\end{lem}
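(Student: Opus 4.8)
The plan is to recognize the quantity as a bilinear Fourier multiplier whose symbol is a second-order Taylor remainder of $\xi\mapsto|\xi|^s$, and then to dispatch the various frequency interactions by Bony's paraproduct decomposition. First I would compute, on the Fourier side, $\cF([v,\dot\Lambda^s]u)(\xi)=\int(|\xi-\eta|^s-|\xi|^s)\,\hat v(\eta)\hat u(\xi-\eta)\,d\eta$ and $\cF(\nabla v\cdot\dot\Lambda^{s-2}\nabla u)(\xi)=-\int \eta\cdot(\xi-\eta)\,|\xi-\eta|^{s-2}\,\hat v(\eta)\hat u(\xi-\eta)\,d\eta$, so that, writing $B(v,u):=[v,\dot\Lambda^s]u-s\nabla v\cdot\dot\Lambda^{s-2}\nabla u$ and $\zeta:=\xi-\eta$, the operator $B$ is the bilinear multiplier with symbol
$$M(\eta,\zeta)=|\zeta|^s-|\zeta+\eta|^s+s\,|\zeta|^{s-2}\,\zeta\cdot\eta.$$
The key observation is that, since $\nabla(|\cdot|^s)(\zeta)=s|\zeta|^{s-2}\zeta$, the zeroth- and first-order contributions cancel and Taylor's formula with integral remainder yields
$$M(\eta,\zeta)=-\int_0^1(1-\theta)\,\eta^{\top}\,D^2(|\cdot|^s)(\zeta+\theta\eta)\,\eta\,d\theta.$$
As $|D^2(|\cdot|^s)(w)|\lesssim|w|^{s-2}$, this produces the pointwise bound $|M(\eta,\zeta)|\lesssim|\eta|^2\sup_{\theta\in[0,1]}|\zeta+\theta\eta|^{s-2}$, which is the analytic heart of the lemma.

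Next I would insert a Littlewood--Paley decomposition $v=\sum_j\dot\Delta_j v$, $u=\sum_k\dot\Delta_k u$, split $B(v,u)=\sum_{j,k}B(\dot\Delta_j v,\dot\Delta_k u)$, and regroup following Bony's decomposition. In the low--high regime $2^j\lesssim 2^{k}$ (with $v$ at the lower frequency), the segment $\zeta+\theta\eta$ stays comparable to $2^k$, so the symbol is of size $2^{2j}2^{k(s-2)}$; collecting the low modes of $v$ into a factor of the form $\dot S_{k-N}(\nabla^2 v)$, whose $L^\infty$ norm is controlled by $\|\nabla^2 v\|_{L^\infty}$ uniformly in $k$, and using that the outputs are frequency-localized near $2^k$, an $\ell^2$ almost-orthogonal summation yields the term $\|\nabla^2 v\|_{L^\infty}\|u\|_{\dot H^{s-2}}$. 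In the high--low and high--high regimes $2^j\gtrsim 2^k$ (with $v$ at the higher frequency) the remainder factorization is no longer convenient, so I would instead estimate the three pieces of $M$ separately, placing $s$ derivatives on $v$ and only $L^\infty$ on $u$; since the output is then localized at frequency $\lesssim 2^j$, the summation produces the remaining term $\|v\|_{\dot H^s}\|u\|_{L^\infty}$.

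The main obstacle is precisely the region where the segment from $\zeta$ to $\zeta+\eta$ passes close to the frequency origin, which occurs in the high--high (diagonal) interaction: there no frequency dominates, the gain from the commutator is lost, and the factor $|\zeta+\theta\eta|^{s-2}$ is singular. This is exactly where the hypothesis $s>1$ is essential, as it renders the remainder integral $\int_0^1(1-\theta)|\zeta+\theta\eta|^{s-2}\,d\theta$ convergent and guarantees the absolute convergence of the diagonal sum; Bernstein's inequality then upgrades the $\dot H^s\times L^\infty$ control into the desired $L^2$ bound after the $\ell^2$ summation. All remaining steps are routine paraproduct and Bernstein estimates in the spirit of \cite[Chap.~2]{BCD}, and the argument is in fact the homogeneous counterpart of \cite[Lemma A.4]{BDDN}.
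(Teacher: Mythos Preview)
The paper does not supply its own proof of this lemma; it simply refers the reader to \cite[Lemma A.4]{BDDN}. Your sketch follows precisely that reference (as you yourself note in your final sentence): identify the symbol as a second-order Taylor remainder of $|\xi|^s$, then treat the frequency interactions by paraproduct decomposition, using the remainder representation in the low--high block and the crude termwise bound in the high--low and diagonal blocks. So there is no divergence of method to report.

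One small remark on your closing paragraph: your explanation of where $s>1$ enters is slightly muddled. You correctly say that in the high--low and high--high blocks you abandon the Taylor remainder and bound the three symbol pieces separately; but then you locate the need for $s>1$ in the convergence of the remainder integral $\int_0^1(1-\theta)|\zeta+\theta\eta|^{s-2}\,d\theta$, which you are no longer using in those blocks. In practice the constraint $s>1$ shows up when handling the piece $s\,|\zeta|^{s-2}\zeta\cdot\eta$ (equivalently $\nabla v\cdot\dot\Lambda^{s-2}\nabla u$) in the high--low/diagonal regimes: there one needs $\|\dot\Lambda^{s-1}\dot\Delta_k u\|_{L^2}\lesssim 2^{k(s-1)}\|\dot\Delta_k u\|_{L^\infty}$ with a nonnegative power to sum over $k\leq j$, which requires $s\geq1$ (strict inequality for absolute convergence). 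This is a cosmetic issue in your write-up; the argument itself is correct.
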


\bigskip

\newpage

\centerline{Kuntal Bhandari}
\centerline{Institute of Mathematics of the Czech Academy of Sciences}
\centerline{\v Zitna 25, 115 67 Praha 1, Czech Republic}
 \centerline{E-mail: bhandari@math.cas.cz}
 \vskip0.5cm
\centerline{Bernard Ducomet}
 \centerline{Universit\'e Paris-Est}
\centerline{LAMA (UMR 8050), UPEMLV, UPEC, CNRS}
\centerline{ 61 Avenue du G\'en\'eral de Gaulle, F-94010 Cr\'eteil, France}
 \centerline{E-mail: bernard.ducomet@u-pec.fr}
 \vskip0.5cm
\centerline{\v S\'arka Ne\v casov\'a}
 \centerline{Institute of Mathematics of the Czech Academy of Sciences}
\centerline{\v Zitna 25, 115 67 Praha 1, Czech Republic}
 \centerline{E-mail: matus@math.cas.cz}
  \vskip0.5cm
\centerline{John Sebastian H. Simon}
\centerline{Mathematisches Institut}
	\centerline{Universit{\"a}t Koblenz}
	\centerline{Universit{\"a}tsstrasse 1, 56070 Koblenz, Germany}
	\centerline{E-mail:  jhsimon1729@gmail.com; jhsimon@uni-koblenz.de}

\end{document}